\documentclass[twoside,11pt]{article}
\usepackage{jmlr2earxiv}

\usepackage{amsmath} 
\usepackage{amsthm}
\usepackage{mathtools}
\usepackage{bbm}
\usepackage{latexsym}
\usepackage{dsfont}
\usepackage{pifont}
\usepackage{color} 
\usepackage{tikz}
\usepackage{verbatim}
\usepackage{hyperref}
\usepackage{tikz}
\usetikzlibrary{arrows}
\usetikzlibrary{calc}
\usetikzlibrary{positioning}
\usepackage{times}

\graphicspath{{../Figures/}}

\newtheorem{theorem}{Theorem}
\newtheorem{lemma}[theorem]{Lemma}

\newtheorem{proposition}[theorem]{Proposition}
\theoremstyle{definition}
\newtheorem{example}[theorem]{Example}
\newtheorem{definition}[theorem]{Definition}
\newtheorem{remark}[theorem]{Remark}

\newcommand{\Rcal}{\mathcal{R}}
\newcommand{\argmax}{\operatorname{argmax}}
\newcommand{\supp}{\operatorname{supp}}
\newcommand{\Pcal}{\mathcal{P}}
\newcommand{\Gcal}{\mathcal{G}}
\newcommand{\Fcal}{\mathcal{F}}
\newcommand{\Wcal}{\mathcal{W}}

\newcommand{\sym}{\operatorname{sym}}
\newcommand{\conv}{\operatorname{conv}}
\definecolor{bl}{RGB}{20,20,150}

\ShortHeadings{Geometry and Determinism of POMDPs}{Mont\'ufar, Ay, Ghazi-Zahedi}
\firstpageno{1}

\begin{document}
\thispagestyle{empty}
\title{Geometry and Determinism of Optimal Stationary Control in Partially Observable Markov Decision Processes}

\author{\name Guido Mont\'ufar \email montufar@mis.mpg.de \\
	\addr Max Planck Institute for Mathematics in the Sciences\\
	04103 Leipzig, Germany
	\AND
	\name Keyan Ghazi-Zahedi \email zahedi@mis.mpg.de \\
	\addr Max Planck Institute for Mathematics in the Sciences\\
	04103 Leipzig, Germany
	\AND
	\name Nihat Ay \email nay@mis.mpg.de \\
	\addr Max Planck Institute for Mathematics in the Sciences\\
	04103 Leipzig, Germany \smallskip \\
	Faculty of Mathematics and Computer Science \\
	Leipzig University \\
	04009 Leipzig, Germany \smallskip \\
	Santa Fe Institute \\
	Santa Fe, NM 87501, USA
	}
\editor{}

\maketitle

\begin{abstract}
It is well known that for any finite state Markov decision process (MDP) there is a memoryless deterministic policy that maximizes the expected reward. For partially observable Markov decision processes (POMDPs), optimal memoryless policies are generally stochastic. We study the expected reward optimization problem over the set of memoryless stochastic policies. We formulate this as a constrained linear optimization problem and develop a corresponding geometric framework. We show that any POMDP has an optimal memoryless policy of limited stochasticity, which allows us to reduce the dimensionality of the search space. Experiments demonstrate that this approach enables better and faster convergence of the policy gradient on the evaluated systems. 
\end{abstract}

\begin{keywords}
MDP, POMDP, partial observability, memoryless stochastic policy, average reward, policy gradient, reinforcement learning
\end{keywords}

\section{Introduction}

The field of reinforcement learning addresses a broad class of problems where an agent has to learn how to act in order to maximize some form of cumulative reward. 
On choosing action $a$ at some world state $w$ the world undergoes a transition to state $w'$ with probability $\alpha(w'|w,a)$ and the agent receives a reward signal $R(w,a,w')$. 
A policy is a rule for selecting actions based on the information that is available to the agent at each time step. 
In the simplest case, the Markov decision process (MDP), the full world state is available to the agent at each time step. 
A key result in this context shows the existence of optimal policies which are memoryless and deterministic~\citep[see][]{Ross:1983:ISD:538843}. 
In other words, the agent performs optimally by choosing one specific action at each time step based on the current world state. 
The agent does not need to take the history of world states into account, nor does he need to randomize his actions. 

In many cases one has to assume that the agent experiences the world only through noisy sensors and the agent has to choose actions based only the partial information provided by these sensors. 
More precisely, if the world state is $w$, the agent only observes a sensor state $s$ with probability $\beta(s|w)$. 
This setting is known as partially observable Markov decision process (POMDP). 
Policy optimization for POMDPs has been discussed by several authors~\citep[see][]{Sondik1978,Chrisman1992aReinforcement, Littman1995aLearning,Mccallum1996aReinforcement,Parr1995aApproximating}. 
Optimal policies generally need to take the history of sensor states into account. 
This requires that the agent be equipped with a memory that stores the sensor history or an encoding thereof (e.g., a properly updated belief state) which may require additional computation. 

Although in principle possible, in practice it is often too expensive to find or even to store and execute completely general optimal policies. 
Some form of representation or approximation is needed. 
In particular, in the context of embodied artificial intelligence and systems design~\citep{Pfeifer2006aHow-the-Body} the on-board computation sets limits to the complexity of the controller with respect to both, memory and computational cost. 
We are interested in policies with limited memory~\citep[see, e.g.,][]{Hansen:1998:FCP:928126}. 
In fact we will focus on memoryless stochastic  policies~\citep[see][]{singh1994learning,Jaakkola95reinforcementlearning}. 
Memoryless policies may be worse than policies with memory, but they require far fewer parameters and computation. 
Among other approaches, the GPOMDP algorithm~\citep{Baxter:2001:IPE:1622845.1622855} provides a gradient based method to optimize the expected reward over parametric models of memoryless stochastic policies. 
For interesting systems, the set of all memoryless stochastic policies can still be very high dimensional and it is important to find good models. 
In this article we show that each POMDP has an optimal memoryless policy of limited stochasticity, which allows us to construct low-dimensional differentiable policy models with optimality guarantees. 
The amount of stochasticity can be bounded in terms of the amount of perceptual aliasing, independently of the specific form of the reward signal. 

We follow a geometric approach to memoryless policy optimization for POMDPs. 
The key idea is that the objective function (the expected reward per time step) can be regarded as a linear function over the set of stationary joint distributions over world states and actions. 
For MDPs this set is a convex polytope and, in turn, there always exists an optimizer which is an extreme point. 
The extreme points correspond to deterministic policies (which cannot be written as convex combinations of other policies). 
For POMDPs this set is in general not convex, but it can be decomposed into convex pieces. 
There exists an optimizer which is an extreme point of one of these pieces. 
Depending on the dimension of the convex pieces, the optimizer is more or less stochastic. 

This paper is organized as follows. 
In Section~\ref{sec:POMDP} we review basics on POMDPs. 
In Section~\ref{sec:optimalcontrol} we discuss the reward optimization problem in POMDPs as a constrained linear optimization problem with two types of constraints. 
The first constraint is about the types of policies that can be represented in the underlying MDP. 
The second constraint relates policies with stationary world state distributions. 
We discuss the details of these constraints in Sections~\ref{sec:representability} and~\ref{sec:stationarity}. 
In Section~\ref{sec:determinism} we use these geometric descriptions to show that any POMDP has an optimal stationary policy of limited stochasticity. 
In Section~\ref{sec:models} we apply the stochasticity bound to define low dimensional policy models with optimality guarantees. 
In Section~\ref{sec:experiments} we present experiments which demonstrate the usefulness of the proposed models. 
In Section~\ref{sec:conclusion} we offer our conclusions.

\section{Partially observable Markov decision processes}
\label{sec:POMDP}

A discrete time partially observable Markov decision process (POMDP) is defined by a tuple $(W,S,A,\alpha,\beta, R)$, 
where $W$ is a finite set of world states, 
$S$ is a finite set of sensor states, 
$A$ is a finite set of actions, 
$\beta\colon W\to \Delta_S$ is a Markov kernel that describes sensor state probabilities given the world state, 
$\alpha\colon W\times A\to \Delta_W$ is a Markov kernel that describes the probability of transitioning to a world state given the current world state and action, 
$R\colon W\times A\to \mathbb{R}$ is a reward signal. 
A Markov decision process (MDP) is the special case where $W=S$ and $\beta$ is the identity map. 

A policy $\pi$ is a mechanism for selecting actions. 
In general, at each time step $t\in\mathbb{N}$, a policy is defined by a Markov kernel $\pi_t$ taking the history $h_t= (s_0,a_0,\ldots, s_t)$ of sensor states and actions to a probability distribution $\pi_t(\cdot|h_t)$ over $A$. 
A policy is \emph{deterministic} when at each time step each possible history leads to a single positive probability action. 
A policy is \emph{memoryless} when the distribution over actions only depends on the current sensor state, $\pi_t(\cdot|h_t)=\pi_t(\cdot|s_t)$. 
A policy is \emph{stationary} (homogeneous) when it is memoryless and time independent, $\pi_t(\cdot|h_t)=\pi(\cdot|s_t)$ for all~$t$. 
Stationary policies are represented by kernels of the form $\pi\colon S\to \Delta_A$. We denote the set of all such policies by $\Delta_{S,A}$. 

The goal is to find a policy that maximizes some form of expected reward. 
We consider the long term expected reward per time step (also called average reward)
\begin{equation}
\Rcal_{\mu}(\pi) = \lim_{T\to \infty}
\mathbb{E}_{\Pr\left\{ (w_t,a_t)_{t=0}^{T-1}\middle|\pi, \mu \right\}}\left[\frac{1}{T}\sum_{t=0}^{T-1} R(w_t, a_t) \right].  
\label{eq:pertimeexpectedreward}
\end{equation}
Here $\Pr\left\{(w_t,a_t)_{t=0}^{T-1}\middle|\pi, \mu \right\}$ is the probability of the sequence $w_0,a_0, w_1, a_1, \ldots, w_{T-1},a_{T-1}$, 
given that $w_0$ is distributed according to the start distribution $\mu\in\Delta_W$ and at each time step actions are selected according to the policy $\pi$. 
Another option is to consider a discount factor $\gamma\in(0,1)$ and the discounted long term expected reward 
\begin{equation}
\Rcal^\gamma_{\mu}(\pi) = \lim_{T\to\infty} \mathbb{E}_{\Pr\left\{ (w_t,a_t)_{t=0}^{T-1}\middle|\pi, \mu \right\}}\left[\sum_{t=0}^{T-1} \gamma^t R(w_t, a_t) \right]. 
\label{eq:discountedexpectedreward}
\end{equation} 

In the case of an MDP, it is always possible to find an optimal memoryless deterministic policy. 
In other words, there is a policy that chooses an action deterministically at each time step, depending only on the current world state, which achieves the same or higher long term expected reward as any other policy. 
This fact can be regarded as a consequence of the policy improvement theorem~\citep{Bellman:1957,howard1960dynamic}. 

In the case of a POMDP, policies with memory may perform much better than the memoryless policies. 
Furthermore, within the memoryless policies, stochastic policies may perform much better than the deterministic ones~\citep[see][]{singh1994learning}. 
The intuitive reason is simple: Several world states may produce the same sensor state with positive probability (perceptual aliasing). 
On the basis of such a sensor state alone, the agent cannot discriminate the underlying world state with certainty. 
On different world states the same action may lead to drastically different outcomes. 
Sometimes the agent is forced to choose probabilistically between the optimal actions for the possibly underlying world states (see Example~\ref{sec:example}). 
Sometimes he is forced to choose suboptimal actions in order to minimize the risk of catastrophic outcomes (see Example~\ref{fig:example}). 
On the other hand, the sequence of previous sensor states may help the agent identify the current world state and choose one single optimal action. 
This illustrates why in POMDPs optimal policies may need to take the entire history of sensor states into account and also why the optimal memoryless policies may require stochastic action choices. 

The set of policies that take the histories of sensor states and actions into account grows extremely fast. 
A common approach is to transform the POMDP into a belief-state MDP, where the discrete sensor state is replaced by a continuous Bayesian belief about the current world state. 
Such belief states encode the history of sensor states and allow for representations of optimal policies. 
However, belief states are associated with costly internal computations from the side of the acting agent. 
We are interested in agents subject to perceptual, computational, and storage limitations. 
Here we investigate stationary policies. 

We assume that for each stationary policy $\pi\in\Delta_{S,A}$ there is exactly one stationary world state distribution $p^\pi(w)\in\Delta_W$ and that it is attained in the limit of infinite time when running policy $\pi$, irrespective of the starting distribution $\mu$. This is a standard assumption that holds true, for instance, whenever the transition kernel $\alpha$ is strictly positive. 
In this case~\eqref{eq:pertimeexpectedreward} can be written as 
\begin{equation}
\mathcal{R}(\pi) = \sum_w p^\pi(w) \sum_{a} p^\pi(a|w) R(w,a), 
\label{eq:rew}
\end{equation}
where $p^\pi(a|w) = \sum_{s}\pi(a|s) \beta(s|w)$. 
An optimal stationary policy is a policy $\pi^\ast\in\Delta_{S,A}$ with $\Rcal(\pi^\ast)\geq \Rcal(\pi)$ for all $\pi\in\Delta_{S,A}$. 
Note that maximizing~\eqref{eq:rew} over $\Delta_{S,A}$ is the same as maximizing the discounted expected reward~\eqref{eq:discountedexpectedreward} over $\Delta_{S,A}$ with $\mu(w)=p^\pi(w)$~\citep[see][]{singh1994learning}. 
The expected reward per time step appears more natural for POMDPs than the discounted expected reward, because, assuming ergodicity, it is independent of the starting distribution, which is not directly accessible to the agent. 
Our discussion focusses on average rewards, but our main Theorem~\ref{theorem:POMDP} also covers discounted rewards. 

Our analysis is motivated by the following natural question: 
Given that every MDP has a stationary deterministic optimal policy, 
does every POMDP have an optimal stationary policy with small stochasticity? 
Bounding the required amount of stochasticity for a class of POMDPs would allow us to define a policy model $\mathcal{M}\subseteq\Delta_{S,A}$ with 
\begin{equation}
\max_{\pi\in\Delta_{S,A}} \mathcal{R}(\pi) = \max_{\pi\in\mathcal{M}} \mathcal{R}(\pi),
\label{eq:optidom}
\end{equation}
for every POMDP from that class. 
We will show that such a model $\mathcal{M}$ can be defined in terms of the number of ambiguous sensor states and actions, 
such that $\mathcal{M}$ contains optimal stationary policies for all POMDPs with that number of actions and ambiguous sensor states.  
Depending on this number, $\mathcal{M}$ can be much smaller in dimension than the set of all stationary policies. 

The following examples illustrate some cases where optimal stationary control requires stochasticity and some of the intricacies involved in upper bounding the necessary amount of stochasticity. 

\begin{figure}
	\centering
\begin{tabular}{cc}	
\includegraphics[scale=.8]{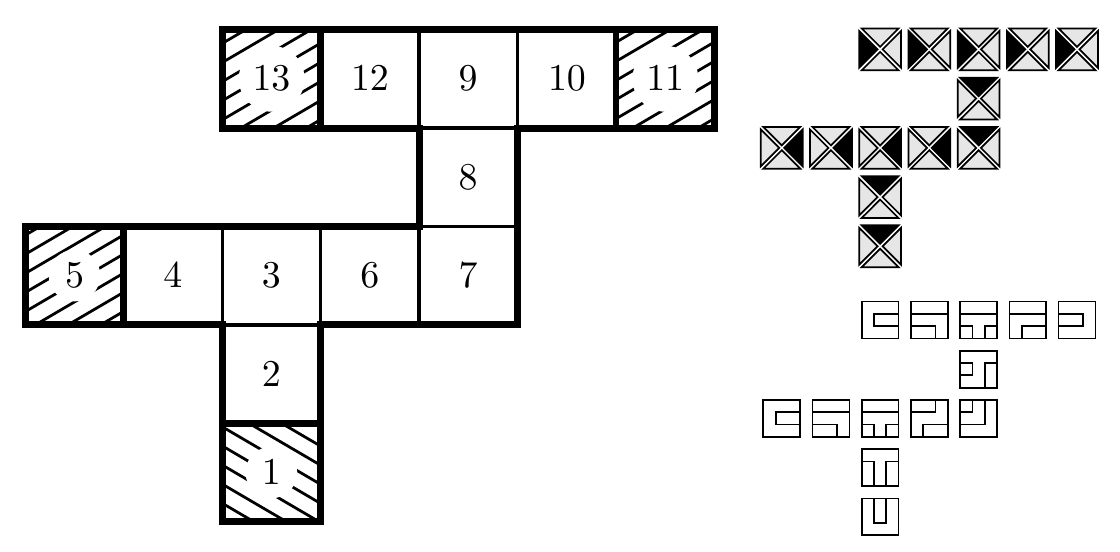} 
& 
\includegraphics[scale=.9]{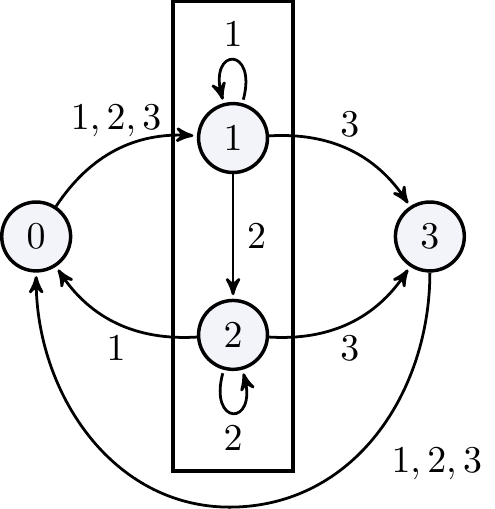}\\
(a) & (b)
\end{tabular}
\caption{
(a) Illustration of the maze Example~\ref{sec:example}. The left part shows the configuration of world states. 
The upper right shows an optimal deterministic policy in the MDP setting. 
At each state, the policy action is in the black direction. 
The lower right shows the sensor states as observed by the agent in each world state. 
(b) State transitions from Example~\ref{example1}. 
}
\label{fig:example}
\end{figure}

\begin{example}
Consider a system with $W=\{1,\ldots, n\}$, $S=\{1\}$, and $A=\{1,\ldots, n\}$. 
The reward function $R(w,a)$ is $+1$ on $a=w$ and $-1$ otherwise. 
The agent starts at some random state. 
On state $w=i$ action $a=i$ takes the agent to some random state and all other actions leave the state unchanged. 
In this case the best stationary policy chooses actions uniformly at random. 
\end{example}

\begin{example}
\label{sec:example}
Consider the grid world illustrated in Figure~\ref{fig:example}a. 
The agent has four possible actions, \emph{north}, \emph{east}, \emph{south}, and \emph{west}, which are effective when there is no wall in that direction. 
On reaching cells $5$, $11$, and $13$ the agent is teleported to cell $1$. 
On $13$ he receives a reward of one and otherwise none. 
In an MDP setting, the agent knows its absolute position in the maze. 
A deterministic policy can be easily constructed that leads to a maximal reward, 
as depicted in the upper right. 
In a POMDP setting the agent may only sense the configuration of its immediate surrounding, as depicted in the lower right.  
In this case any memoryless deterministic policy fails. 
Cells~$3$ and~$9$ look the same to the agent. 
Always choosing the same action on this sensation will cause the agent to loop around never reaching the reward cell $13$. 
Optimally, the agent should choose probabilistically between \emph{east} and \emph{west}. 
The reader might want to have a look at the experiments treating this example in Section~\ref{sec:experiments}. 
\end{example}

\begin{example}
\label{example1}
Consider the system illustrated in Figure~\ref{fig:example}b. 
Each node corresponds to a world state $W=\{0,1,2,3\}$. 
The sensor states are $S=\{0,1,3\}$, whereby $1,2$ are sensed as $1$. 
The actions are $A=\{1,2,3\}$. 
Choosing action $1$ in state $1$ and action $2$ in state $2$ has a large negative reward. 
Choosing action $2$ in state $1$ and action $1$ in state $2$ has a large positive reward. 
Choosing action $3$ in $1,2$ has a moderate negative reward and takes the agent to state $3$. 
From state $3$ each action has a large positive reward and takes the agent to $0$. 
From state $0$ any action takes the agent to $1$ or $2$ with equal probability. 
In an MDP setting the optimal policy will choose action $2$ on $1$ and action $1$ on $2$. 
In a POMDP setting the optimal policy chooses action $3$ on~$1$. 
This shows that the optimal actions in a POMDP do not necessarily correspond to the optimal actions in the underlying MDP. 
Similar examples can be constructed where on a given sensor state it may be necessary to choose from a large set of actions at random, larger than the set of actions that would be chosen on all possibly underlying world states, were they directly observed. 
\end{example}

\section{Average reward maximization as a constrained linear optimization problem}
\label{sec:optimalcontrol}

The expression $\sum_w p(w) \sum_{a} p(a|w) R(w,a)$ 
that appears in the expected reward~\eqref{eq:rew} 
is linear in the joint distribution $p(w,a) = p(w) p(a|w)\in \Delta_{W\times A}$. 
We want to exploit this linearity. 
The difficulty is that the optimization problem is with respect to the policy $\pi$, not the joint distribution, 
and the stationary world state distribution $p^\pi(w)$ depends on the policy. 
This implies that not all joint distributions $p(w,a)$ are feasible. 
The feasible set is delimited by the following two conditions. 
\begin{itemize}

\item Representability in terms of the policy: 
\begin{equation}
p(a|w) = \sum_s \pi(a|s)\beta(s|w) ,  \quad\text{for some $\pi\in \Delta_{S,A}$. } \label{condition1}
\end{equation}
The geometric interpretation is that the conditional distribution $p(a|w)$ belongs to the polytope $G\subseteq\Delta_{W, A}$ defined as the image of $\Delta_{S, A}$ by the linear map 
\begin{equation}
f_\beta\colon \pi(a|s)\mapsto \sum_s\pi(a|s)\beta(s|w).
\label{equation:fbeta}
\end{equation} 
%
In turn, the joint distribution $p(w,a)$ belongs to the set $F\subseteq\Delta_{W \times A}$ of joint distributions with conditionals $p(a|w)$ from the set $G$. 
In general the set $F$ is not convex, although it is convex in the marginals $p(w)$ when fixing the conditionals $p(a|w)$, and vice versa. 
We discuss the details of this constraint in Section~\ref{sec:representability}. 
%

\item Stationarity of the world state distribution: 
\begin{equation}
\sum_a p(w, a) \alpha(w'|w,a) \in \Xi, \label{condition2}
\end{equation}
where $\Xi\subseteq \Delta_{W\times W}$ is the polytope of distributions $p(w,w')$ with equal first and second marginals, $\sum_w p(w,\cdot) = \sum_{w'}p(\cdot,w')$. 
This means that $p(w)$ is a stationary distribution of the Markov transition kernel $p(w'|w)$. 		
The geometric interpretation is that $p(w,a)$ belongs to the polytope $J:= f^{-1}_\alpha(\Xi)\subseteq\Delta_{W\times A}$ 
defined as the preimage of $\Xi$ by the linear map 
\begin{equation}
f_\alpha\colon p(w,a)\mapsto\sum_a p(w,a)\alpha(w'|w,a). 
\label{eq:f-alpha}
\end{equation}
We discuss the details of this constraint in Section~\ref{sec:stationarity}. 
\end{itemize}
Summarizing, the objective function $\Rcal\colon \pi\mapsto \sum_w p^\pi(w)\sum_a p^\pi(a|w)R(w,a)$ is the restriction of the linear function $p(w,a)\mapsto \sum_{w,a}p(w,a)R(w,a)$ to a feasible domain of the form $F \cap J\subseteq \Delta_{W\times A}$, where $F$ is the set of joint distributions with conditionals from a convex polytope $G$, and $J$ is a convex polytope. 
We illustrate these notions in the next example. 

\begin{example}
\label{example:extremal}
Consider the system illustrated at the top of Figure~\ref{fig:extremal}. 
There are two world states $W=\{1,2\}$, two sensor states $S=\{1,2\}$, and two possible actions $A=\{1,2\}$. 
The sensor and transition probabilities are given by  
\begin{equation*}
\beta = 
\begin{bmatrix}
1/2 & 1/2\\ 1/2 &1/2
\end{bmatrix}, 
\quad 
\alpha(\cdot|w=1,\cdot)=\left[\begin{matrix}1&0\\0&1\end{matrix}\right],
\;
\alpha(\cdot|w=2,\cdot)=\left[\begin{matrix} 1/2&1/2\\ 1/2&1/2\end{matrix}\right].  
\end{equation*}
In the following we discuss the feasible set of joint distributions. 
The policy polytope $\Delta_{S,A}$ is a square. 
The set of realizable conditional distributions of  world states given actions is the line  
\begin{equation*}
G = f_\beta(\Delta_{S,A}) = \conv\left\{
\left[\begin{matrix}1&0\\1&0\end{matrix}\right],
\left[\begin{matrix}0&1\\0&1\end{matrix}\right] 
\right\} 
\end{equation*}
inside of the square $\Delta_{W,A}$. 
The set $F$ of joint distributions with conditionals from $G$ is a twisted surface. 
This set has one copy of $G$ for every world state distribution $p(w)$. 
See the lower left of Figure~\ref{fig:extremal}.  
%
The set $J$ of joint distributions over world states and actions that satisfy the stationarity constraint~\eqref{condition2} is the subset of $\Delta_{W\times A}$ that $f_\alpha$ maps to the polytope $\Xi$ shown in the lower right of Figure~\ref{fig:extremal}. 
This is the triangle   
\begin{equation*}
J=f^{-1}_\alpha(\Xi)
= \conv
\left\{
\left[\begin{matrix}1&0\\0&0\end{matrix}\right], 
\left[\begin{matrix}0&1/3\\0&2/3\end{matrix}\right], 
\left[\begin{matrix}0&1/3\\2/3&0\end{matrix}\right]
\right\}.  
\end{equation*}
As we will show in Lemma~\ref{proposition1}, the extreme points of $J$ can always be written in terms of extreme points of $\Delta_{W,A}$; in the present example, in terms of 
$\left[\begin{smallmatrix}1&0\\0&1\end{smallmatrix}\right]$ (or $\left[\begin{smallmatrix}1&0\\1&0\end{smallmatrix}\right]$), $\left[\begin{smallmatrix}0&1\\0&1\end{smallmatrix}\right]$, $\left[\begin{smallmatrix}0&1\\1&0\end{smallmatrix}\right]$. 
The set $F\cap J$ is a curve. 
This is the feasible domain of the expected reward $\Rcal$, viewed as a function of joint distributions over world states and actions. 

\begin{figure}[t]
\centering
\includegraphics{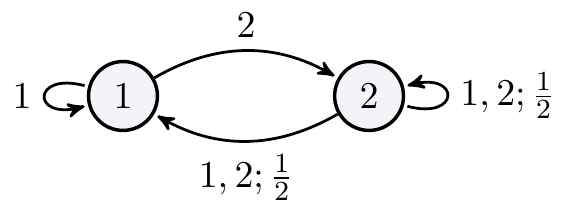}\\
\includegraphics[clip=true,trim=.5cm 1cm .5cm 1cm]{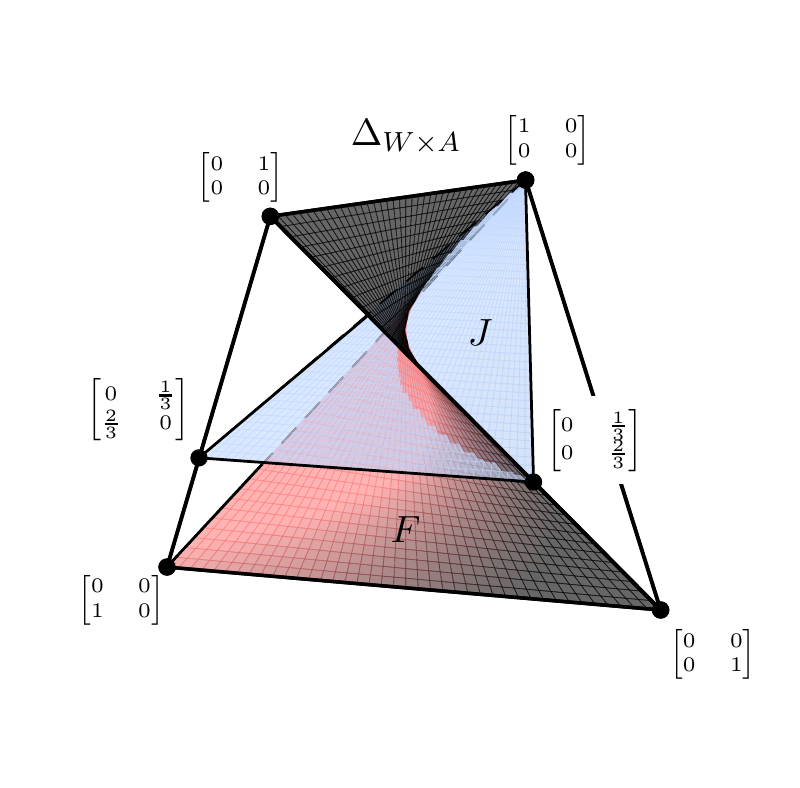}\quad
\includegraphics[clip=true,trim=.5cm 1cm .5cm 1cm]{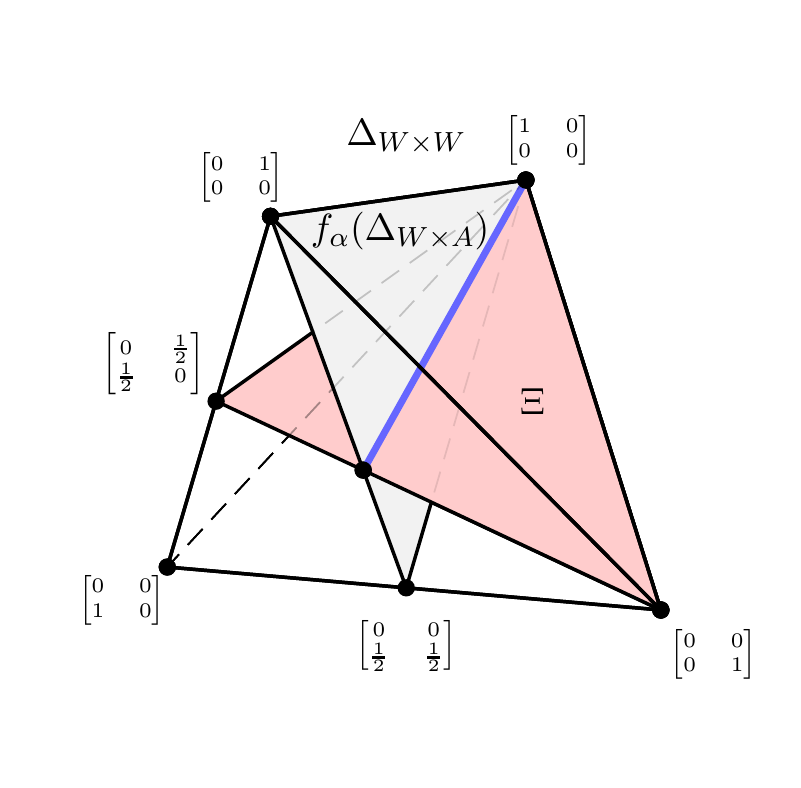}
\caption{
Illustration of Example~\ref{example:extremal}. 
The world state transitions are shown in the upper part. They are deterministic from $w=1$ and random from $w=2$. 
The lower left shows, inside of $\Delta_{W\times A}$, 
the set $F$ defined by the representability constraint~\eqref{condition1} and the polytope $J=f_\alpha^{-1}(\Xi)$ defined by the stationarity constraint~\eqref{condition2}. 
The lower right shows, inside of~$\Delta_{W\times W}$, the polytopes $f_\alpha(\Delta_{W\times A})$ and $\Xi$. 
}
\label{fig:extremal}
\end{figure}
\end{example}

\section{Determinism of optimal stationary policies}
\label{sec:determinism}

In this section we discuss the minimal stochasticity of optimal stationary policies. 
In order to illustrate our geometric approach we first consider MDPs and then the more general case of POMDPs. 

\begin{theorem}[MDPs]
\label{theorem:MDP}
Consider an MDP $(W,A,\alpha,R)$. 
Then there is a deterministic optimal stationary policy. 
\end{theorem}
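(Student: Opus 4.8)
The plan is to specialize the geometric picture of Section~\ref{sec:optimalcontrol} to the MDP case $W=S$, $\beta=\mathrm{id}$, where it collapses. First I would note that then the map $f_\beta$ of~\eqref{equation:fbeta} is the identity, so $G=f_\beta(\Delta_{S,A})=\Delta_{W,A}$: every conditional $p(a|w)$ is realizable, by the policy $\pi=p(\cdot\mid\cdot)$ itself. Hence the representability set is $F=\Delta_{W\times A}$, and the feasible domain of the objective is just $F\cap J=J$, the convex polytope of joint distributions $p(w,a)$ whose world-state marginal is stationary under the induced kernel $p(w'|w)=\sum_a p(w,a)\alpha(w'|w,a)$.

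Under the standing assumption that each $\pi$ has a unique stationary world-state distribution, the assignment $\pi\mapsto p^\pi(w,a)=p^\pi(w)\,p^\pi(a|w)$ sends $\Delta_{S,A}$ into $J$, and conversely every $p\in J$ is the stationary joint distribution $p^{\pi}$ of the policy $\pi(a|w):=p(a|w)$ (any choice off $\supp p(w)$ is immaterial to the reward and, by uniqueness, cannot create a competing stationary distribution). Under this correspondence $\Rcal(\pi)$ is exactly the restriction to $J$ of the linear functional $\ell\colon p\mapsto\sum_{w,a}p(w,a)R(w,a)$ on $\Delta_{W\times A}$. Since $\ell$ is linear and $J$ is a nonempty compact polytope, its maximum over $J$ is attained at an extreme point $p^\ast$. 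By Lemma~\ref{proposition1}, $p^\ast$ can be written in terms of an extreme point of $\Delta_{W,A}$, i.e.\ the conditional $p^\ast(a|w)$ is deterministic; letting $\pi^\ast\in\Delta_{S,A}$ be this deterministic policy we have $p^{\pi^\ast}=p^\ast$, and therefore $\Rcal(\pi)=\ell(p^\pi)\le\ell(p^\ast)=\Rcal(\pi^\ast)$ for every $\pi\in\Delta_{S,A}$, which is the claim.

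The only substantive ingredient is Lemma~\ref{proposition1} --- that the vertices of $J$ project to deterministic policies --- which I treat as available from Section~\ref{sec:stationarity}; everything else above is bookkeeping. Were one to prove the theorem self-contained, this vertex description is where the work lies: $J$ is exactly the polytope of long-run state-action occupation measures cut out by the balance equations $\sum_a p(w',a)=\sum_{w,a}p(w,a)\alpha(w'|w,a)$ together with $p\ge 0$ and $\sum_{w,a}p(w,a)=1$, and one argues that at a vertex the conditional $p(a|w)$ cannot be split at any state without leaving $J$, using the uniqueness-of-stationary-distribution hypothesis to exclude the degenerate multichain situation.
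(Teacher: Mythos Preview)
Your argument is correct and follows essentially the same route as the paper: reduce to maximizing the linear functional $\sum_{w,a}p(w,a)R(w,a)$ over the polytope $J$ (since in the MDP case $F=\Delta_{W\times A}$ and hence $F\cap J=J$), pick an extreme point, and invoke Lemma~\ref{proposition1} to conclude that the conditional is deterministic. You spell out the correspondence $\pi\leftrightarrow p^\pi$ more carefully than the paper does, but the structure and the key ingredient (Lemma~\ref{proposition1}) are identical; one small slip is that the lemma lives in Section~\ref{sec:determinism}, not Section~\ref{sec:stationarity}.
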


\begin{proof}[Proof of Theorem~\ref{theorem:MDP}]
The objective function $\Rcal$ defined in Equation~\eqref{eq:rew} can be regarded as the restriction of a linear function over $\Delta_{W\times A}$ to the feasible set $J$ defined in Equation~\eqref{eq:f-alpha}. 
Since $J$ is a convex polytope, 
the objective function is maximized at one of its extreme points. 
By Lemma~\ref{proposition1}, all extreme points of $J$ can be realized by extreme points of $\Delta_{W,A}$, that is, deterministic policies. 
\end{proof}

\begin{lemma}
\label{proposition1} 
Each extreme point of $J$ can be written as $p(w,a)=p(w)p(a|w)$, where $p(w)\in \Delta_{W}$ and $p(a|w)$ is an extreme point of $\Delta_{W,A}$. 
\end{lemma}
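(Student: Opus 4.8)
The plan is to argue directly on the polytope $J$ and show that at an extreme point $p^\ast$ the conditional $p^\ast(a|w)$ is forced to be deterministic for every world state in the support of the marginal $p^\ast(w)=\sum_a p^\ast(w,a)$. For world states $w$ with $p^\ast(w)=0$ the joint vanishes on $\{w\}\times A$, so we may then assign any deterministic value to $p^\ast(\cdot|w)$ without changing $p^\ast$; hence it suffices to treat the states of positive probability. Write $W_+=\{w:p^\ast(w)>0\}$ and suppose, for contradiction, that $p^\ast(\cdot|w_0)$ assigns positive probability to at least two actions for some $w_0\in W_+$.

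First I would record a support observation: if $p^\ast(w,a)>0$ and $p^\ast(u)=0$, then $\alpha(u|w,a)=0$. This is immediate since $0=p^\ast(u)=\sum_{w',a'}p^\ast(w',a')\alpha(u|w',a')$ is a sum of nonnegative terms. In words, from $\supp(p^\ast)$ the dynamics never leaves $W_+$.

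The core of the proof is a perturbation argument: I would look for a nonzero signed measure $v$ on $W\times A$, supported on $\supp(p^\ast)$, with $p^\ast\pm\epsilon v\in J$ for all small $\epsilon>0$; such a $v$ exhibits $p^\ast$ as a nontrivial convex combination of two points of $J$, contradicting extremality. Because $v$ vanishes wherever $p^\ast$ does, the inequalities $p(w,a)\ge 0$ survive small perturbations, and the active inequalities of $\Xi$ at $f_\alpha(p^\ast)$ survive as well: by the support observation $f_\alpha(v)(w,w')=\sum_a v(w,a)\alpha(w'|w,a)=0$ whenever $f_\alpha(p^\ast)(w,w')=0$. Thus the only remaining requirements on $v$ are the affine ones, namely $\sum_{w,a}v(w,a)=0$ (so that $p^\ast\pm\epsilon v$ and $f_\alpha(p^\ast\pm\epsilon v)$ still have total mass one) and $f_\alpha(v)$ has equal marginals (so that $f_\alpha(p^\ast\pm\epsilon v)$ stays in $\Xi$). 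Now comes the dimension count. By the support observation $f_\alpha(v)$ is supported on the rows indexed by $W_+$, and the equal-marginal identity at any index outside $W_+$ holds automatically, so the equal-marginal condition is a system of $|W_+|$ linear equations in $v$; these equations sum to a triviality, so their rank is at most $|W_+|-1$, and together with the single normalization equation we have at most $|W_+|$ constraints. On the other hand $v$ ranges over $\mathbb{R}^{\supp(p^\ast)}$, and $|\supp(p^\ast)|=\sum_{w\in W_+}|\supp(p^\ast(\cdot|w))|\ge|W_+|+1$, since $p^\ast(\cdot|w_0)$ charges two actions while every other $w\in W_+$ charges at least one. Hence the homogeneous system has a nonzero solution $v$, the desired contradiction, so $p^\ast(\cdot|w)$ is deterministic for every $w\in W_+$.

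The step I expect to be the real obstacle is the counting above. Ignoring the support observation one only gets the crude bound $|\supp(p^\ast)|\le|W|$, which does not rule out the extra action at $w_0$: it could in principle be ``paid for'' by a transient state outside $W_+$. The point is that the binding stationarity constraints actually live on the recurrent set $W_+$, not on all of $W$, so that any stochasticity on a state of positive probability genuinely frees up a perturbation direction; making this precise through the support observation is what makes the argument go through.
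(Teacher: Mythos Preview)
Your argument is correct. The key move---localizing to the support $W_+$ via the observation that $\alpha(u|w,a)=0$ whenever $p^\ast(w,a)>0$ and $p^\ast(u)=0$, then counting constraints against support size---is the same one the paper uses. The packaging differs: the paper factors $f_\alpha$ through the conditional map $\tilde f_\alpha\colon p(a|w)\mapsto p(w'|w)$, lower-bounds $\dim(J\cap\Delta_{W'\times A'})$ by $\dim(\Delta_{W',A'})$ using that every transition kernel on $W'$ admits a stationary distribution (so the image of $\tilde f_\alpha$ has full dimension in the fiber picture), and then invokes the general fact that an extreme point of a $d$-dimensional affine slice of a simplex meets at least $d$ facets. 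Your route is more direct: you upper-bound the rank of the defining equalities (at most $|W_+|$ once restricted to the support) and perturb, never needing to discuss $\tilde f_\alpha$, fibers, or stationary distributions of subkernels. Both arguments yield the same inequality $|\supp(p^\ast)|\leq |W_+|$; yours simply reaches it with less machinery.
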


\begin{proof}[Proof of Lemma~\ref{proposition1}]
We can view the map $f_\alpha$ from Equation~\eqref{eq:f-alpha} as taking pairs $(p(w),p(a|w))$ to pairs $(p(w),p(w'|w))$. 
Here the marginal distribution is mapped by the identity function $\Delta_W\to\Delta_W;$ $p(w)\mapsto p(w)$ and the conditional distribution by
\begin{equation*}
\tilde f_\alpha\colon \Delta_{W, A}\to \Delta_{W, W};\; p(a|w) \mapsto \sum_a p(a|w) \alpha(w'|w,a) = p(w'|w).  
\end{equation*} 

Consider some $W'\subseteq W$ for which $J$ contains a distribution $q$ whose marginal has support $W'$. 
For each $w\in W'$ let $A_{w}=\{a\in A\colon \supp(\alpha(\cdot|w,a))\subseteq W'\}$ denote the set of actions with transitions that stay in $W'$. 
With a slight abuse of notation let us write $\Delta_{W',A'} := \bigtimes_{w\in W'}\Delta_{A_{w}}\subseteq \Delta_{W',A}$ and $\Delta_{W'\times A'} := \Delta_{W'}\ast \Delta_{W',A'}=\{p(w,a)\colon p(w)\in\Delta_{W'}, p(a|w)\in\Delta_{W',A'} \} \subseteq \Delta_{W'\times A}$ for the corresponding sets of conditional and joint probability distributions. 
Note that out of $\Delta_{W\times A}$ only points from $\Delta_{W'\times A'}$ are mapped to points in $\Delta_{W'\times W'}$ and hence $J \cap \Delta_{W'\times A} \subseteq \Delta_{W'\times A'}$. 
The set $f_\alpha(\Delta_{W'\times A'})$ consists of all joint distributions $p(w,w')=p(w)p(w'|w)$ with $p(w)\in \Delta_{W'}$ and $p(w'|w)\in \tilde f_\alpha(\Delta_{W',A'})\subseteq \Delta_{W',W'}$. 
%
Now, for each conditional $p(w'|w)\in \Delta_{W',W'}$ there is at least one marginal $p(w)\in \Delta_{W'}$ such that the joint $p(w,w')\in\Delta_{W'\times W'}$ is an element of $\Xi$. 
Hence 
	\begin{equation*}
	\dim(f_\alpha(\Delta_{W'\times A'}) \cap \Xi) \geq \dim(\tilde f_\alpha(\Delta_{W',A'})). 
	\end{equation*}	
The set $J \cap \Delta_{W'\times A}$ is the union of the fibers of all points in $f_\alpha(\Delta_{W'\times A'}) \cap \Xi$. Hence 
	\begin{align*} 
	\dim(J \cap \Delta_{W'\times A})
	\geq & \dim(f_\alpha(\Delta_{W'\times A'}) \cap \Xi) + \left(\dim(\Delta_{W',A'})-\dim(\tilde f_\alpha(\Delta_{W',A'}))\right)\\
	\geq & \dim(\tilde f_\alpha(\Delta_{W',A'})) + \dim(\Delta_{W',A'})-\dim(\tilde f_\alpha(\Delta_{W',A'}))\\
	= & \dim(\Delta_{W',A'}). 
	\end{align*}

Let us now consider some extreme point $q$ of $J$. 
Suppose that the marginal of $q$ has support $W'$. 
By the previous discussion, we know that $q$ is an extreme point of the polytope $J\cap \Delta_{W'\times A'}$. 
Furthermore, $J\cap \Delta_{W'\times A'}$ is the $d$-dimensional intersection of an affine space and $\Delta_{W'\times A'}$, where $d\geq\dim(\Delta_{W',A'})$. 
%
This implies that $q$ lies at the intersection of $d$ facets of $\Delta_{W'\times A'}$. 
In turn $|\supp(q(w,\cdot))|=1$, for all $w\in W'$. 
This shows that $q(w,a)=p(w)p(w,a)$, where $p(w)\in\Delta_{W'}$ and $p(a|w)$ is an extreme point of $\Delta_{W',A'}$. 
We can extend this conditional arbitrarily on $w\in W\setminus W'$ to obtain a conditional that is an extreme point of $\Delta_{W,A}$. 
\end{proof}

Now we discuss the minimal stochasticity of optimal stationary policies for POMDPs. 
A policy $\pi\in\Delta_{S,A}$ is called \emph{$m$-stochastic} if it is contained in an $m$-dimensional face of $\Delta_{S,A}$. 
This means that at most $|S|+m$ entries $\pi(a|s)$ are non-zero and, in particular, that $\pi$ is a convex combination of at most $m+1$ deterministic policies. For instance, a deterministic policy is $0$-stochastic and has exactly $|S|$ non-zero entries. 
The following result holds both in the average reward and in the discounted reward settings. 

\begin{theorem}[POMDPs]
	\label{theorem:POMDP}
Consider a POMDP $(W,S,A,\alpha, \beta, R)$. 
Let $U=\{s\in S\colon |\supp(\beta(s|\cdot))|>1\}$. 
Then there is a $|U|(|A|-1)$-stochastic optimal stationary policy. 
Furthermore, for any $W, S, A$ there are $\alpha,\beta,R$ such that every optimal stationary policy is at least $|U|(|A|-1)$-stochastic.  
\end{theorem}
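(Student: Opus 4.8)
The plan is to split the statement into the upper bound (every POMDP has a $|U|(|A|-1)$-stochastic optimal policy) and the matching lower bound (some POMDP forces this much stochasticity). For the upper bound I would follow the geometric picture developed in Sections~\ref{sec:representability}--\ref{sec:stationarity}: the objective $\Rcal$ is a linear function of the joint distribution $p(w,a)\in\Delta_{W\times A}$, restricted to the feasible set $F\cap J$, where $J=f_\alpha^{-1}(\Xi)$ is a convex polytope and $F$ is the (non-convex) set of joints whose conditional $p(a|w)$ lies in $G=f_\beta(\Delta_{S,A})$. The key is to slice $F\cap J$ by fixing the ``ambiguous part'' of the policy. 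Concretely, a policy $\pi\in\Delta_{S,A}$ factors as a choice of $\pi(\cdot|s)$ for $s\in U$ and a choice of $\pi(\cdot|s)$ for $s\notin U$; for an unambiguous $s$ the induced row of $f_\beta(\pi)$ depends linearly and ``one-to-one'' on $\pi(\cdot|s)$ (it just copies that distribution into the unique world state seeing $s$), whereas for $s\in U$ the policy feeds into several world-state rows simultaneously. So I would fix the deterministic-looking structure as follows: enumerate the finitely many ways to choose a deterministic action at each $s\in S\setminus U$; for each such choice the conditional $p(a|w)$ for world states $w$ not sharing a sensor state with $U$ is pinned down, and the remaining freedom in $F\cap J$ is an affine slice of the polytope $J$ of dimension governed only by the $|U|$ free rows $\pi(\cdot|s)$, $s\in U$, each of which contributes at most $|A|-1$ dimensions. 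Taking a vertex of that slice and using Lemma~\ref{proposition1}'s style of argument (a vertex of an intersection of an affine space with a product of simplices has support bounded by the codimension) gives a policy whose only stochastic rows are indexed by $U$, each lying in a face of $\Delta_A$ of dimension $\le|A|-1$, hence at most $|U|(|A|-1)$-stochastic overall. The main obstacle here is making the ``fix the unambiguous rows to deterministic values and the rest is an affine slice of $J$'' step precise: one must check that among the finitely many deterministic completions on $S\setminus U$ there is one realizing the optimum — this needs the observation that the optimal value of the linear objective over each such slice is itself attained at a vertex, and that the global optimum equals the max over these finitely many slice-optima, because every feasible $\pi$ can be pushed (by the linearity of $\Rcal$ in the marginal for fixed conditional, exploited exactly as in Theorem~\ref{theorem:MDP}) toward a vertex without decreasing reward, and rounding the unambiguous rows to vertices of their simplices does not leave the feasible set $F\cap J$ since those rows only affect distinct, otherwise-unconstrained world-state rows.

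For the discounted-reward version I would note that, as remarked in Section~\ref{sec:POMDP}, maximizing $\Rcal^\gamma_\mu$ over $\Delta_{S,A}$ is equivalent to maximizing a linear functional of the discounted occupancy measure, which again lives in a polytope of the same combinatorial type (the Bellman-flow polytope with the same representability constraint from $\beta$), so the identical slicing argument applies verbatim.

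For the lower bound I would construct, for given $W,S,A$ with a prescribed set $U$ of ambiguous sensor states, a POMDP in which each ambiguous sensor state $s\in U$ ``is'' a private copy of Example~\ref{example1}'s gadget (or of the first Example, a one-sensor bandit on $|A|$ actions whose unique optimum is uniform over all $|A|$ actions), wired up so that the gadgets do not interact: partition the relevant world states into blocks, one per $s\in U$, with $\beta$ sending exactly the world states in block $s$ to the shared sensor $s$, and let $\alpha,R$ inside each block be a copy of the forcing example while all other transitions/rewards are trivial. Then the expected reward decomposes as a sum over blocks, each summand is maximized only at a policy row $\pi(\cdot|s)$ lying in the interior of $\Delta_A$ (support all $|A|$ actions, i.e. an $(|A|-1)$-dimensional face), these choices are independent across $s\in U$, so every optimal policy is contained in the face $\prod_{s\in U}\mathrm{int}(\Delta_A)\times(\text{rest})$ and in no lower-dimensional face, giving exactly $|U|(|A|-1)$-stochasticity. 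The work here is bookkeeping: verifying that the product/decoupled construction stays within the fixed cardinalities $|W|,|S|,|A|$ and that the ``uniform is uniquely optimal'' claim for the bandit gadget is genuinely strict (so no smaller face works) — a short direct computation with the $\pm1$ reward shows the per-block reward is a strictly concave-like function maximized only at the uniform point, which I would include explicitly. I expect the upper-bound slicing argument, not the example, to be the delicate part, since it must correctly combine the convexity of $J$, the product structure of the feasible conditionals over sensor states, and the support-counting from Lemma~\ref{proposition1}.
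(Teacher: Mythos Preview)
Your overall plan (linear objective on $\Delta_{W\times A}$, feasible set $F\cap J$, slice into convex pieces, take an extreme point) is the paper's plan, but your slicing runs in the wrong direction, and this is a genuine gap. You fix deterministic actions on $S\setminus U$ and let the rows $\pi(\cdot\mid s)$, $s\in U$, vary. For each such completion $d$, the induced set of conditionals $G_d=\{f_\beta(d,\theta):\theta\in\Delta_{U,A}\}\subseteq\Delta_{W,A}$ is an affine image of $\Delta_{U,A}$, but it is \emph{not} a Cartesian product over $w\in W$: a single ambiguous sensor state $s\in U$ feeds into several world-state rows at once, so those rows are linearly tied. Consequently $F_d=\Delta_W\ast G_d$ is \emph{not} convex (already for $|W|=2$, $S=U=\{1\}$, $\beta\equiv1$, this is the independence surface of Example~\ref{example:extremal}), your description ``an affine slice of the polytope $J$'' is incorrect, and the Lemma~\ref{proposition1} argument does not apply to $F_d\cap J$. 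Nor does your rounding step rescue this: ``rounding the unambiguous rows to vertices of their simplices does not leave $F\cap J$'' is true only in the trivial sense that every policy has \emph{some} stationary joint in $F\cap J$; you have not argued why the new joint has at least the old reward, and since the stationary marginal changes nonlinearly with the policy, linearity of $\Rcal$ in the marginal for \emph{fixed} conditional gives no leverage here.

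The paper slices in the opposite direction: it fixes $\theta=\pi|_U\in\Delta_{U,A}$ and lets $\pi|_{S\setminus U}$ vary. Because each $s\in S\setminus U$ is observed from at most one world state, this gives $G_\theta=\bigtimes_{w\in W}G_{\theta,w}$ with each factor a shifted simplex (Proposition~\ref{proposition2}); then $F_\theta=\Delta_W\ast G_\theta$ \emph{is} convex (Proposition~\ref{proposition:convexset}), Lemma~\ref{proposition1} applies to $F_\theta\cap J$, and the extreme points of $G_\theta$ lie in faces of $G$ of dimension at most $|U|(|A|-1)$, which pull back along $f_\beta$ to faces of $\Delta_{S,A}$ of the same dimension. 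If you prefer to argue directly that an optimal policy may be taken deterministic on $S\setminus U$, the paper does prove exactly that, but by a policy-improvement argument on the value function in the discounted setting (Theorem~\ref{theorem:POMDP2}, Lemma~\ref{lemma:policyimprovement}), not via occupancy-measure convexity.

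Your lower-bound idea is reasonable in spirit, but ``the expected reward decomposes as a sum over blocks'' is not automatic when the blocks are disjoint pieces of $W$ connected by transitions: the stationary distribution couples the blocks and the average reward does not split additively. The paper instead chains the $|U|$ gadgets sequentially (Example~\ref{exopt3}) and computes the unique optimal policy explicitly by Lagrange multipliers (Proposition~\ref{proposition:optimal3}), showing it has full support on every $s\in U$.
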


\begin{proof}[Proof of Theorem~\ref{theorem:POMDP}]
Here we prove the statement for the average reward setting using the geometric descriptions from Section~\ref{sec:optimalcontrol}. 
We cover the discounted setting in Section~\ref{sec:discounted} using value functions and a policy improvement argument. 

Consider the sets $G=f_\beta(\Delta_{S,A})\subseteq\Delta_{W,A}$ and $F=\Delta_W\ast G\subseteq\Delta_{W\times A}$ from Equation~\eqref{equation:fbeta}. 
We can write $G$ as a union of Cartesian products of convex sets, as $G=\bigcup_{\theta\in\Theta} G_\theta$, with $\dim(G)-\dim(G_\theta)\leq \dim(\Theta) = |U|(|A|-1)$. See Proposition~\ref{proposition2} for details.
In turn, we can write $F=\bigcup_{\theta\in\Theta} F_\theta$, where each $F_\theta = \Delta_W\ast G_\theta$ is a convex set of dimension $\dim(F_\theta) = \dim(\Delta_W) + \dim(G_\theta)$. See Proposition~\ref{proposition:convexset} for details. 

The objective function $\mathcal{R}$ is linear over each polytope $F_\theta\cap J$ and is maximized at an extreme point of one of these polytopes. 
If $F_\theta\cap J\neq \emptyset$, then each extreme point of $F_\theta\cap J$ can be written as $p(w,a)=p(w)p(a|w)$, where $p(a|w)$ is an extreme point of $G_\theta$. 
To see this, note that the arguments of Lemma~\ref{proposition1} still hold when we replace $J$ by $F_\theta\cap J$ and $\Delta_{W,A}$ by $G_\theta$. 
Each extreme point of $G_\theta$ lies at a face of $G$ of dimension at most $|U|(|A|-1)$. See Proposition~\ref{proposition2} for details. 
Now, since $f_\beta$ is a linear map, the points in the $m$-dimensional faces of $G$ have preimages by $f_\beta$ in $m$-dimensional faces of $\Delta_{S,A}$. 
Thus, there is a maximizer of $\Rcal$ that is contained in a $|U|(|A|-1)$ face of $\Delta_{S,A}$. 

The second statement, regarding the optimality of the stochasticity bound, follows from Proposition~\ref{proposition:optimal3}, which computes the optimal stationary policies of a class of POMDPs analytically. 
\end{proof}

\begin{remark} \mbox{}
\begin{itemize}
\item 
Our Theorem~\ref{theorem:POMDP} also has an interpretation for non-ergodic systems: Among all pairs $(\pi,p^\pi(w))$ of stationary policies and associated stationary world state distributions, the highest value of $\sum_wp^\pi(w)\sum_ap^\pi(a|w)R(w,a)$ is attained by a pair where the policy $\pi$ is $|U|(|A|-1)$-stochastic. 
However, this optimal stationary average reward is only equal to~\eqref{eq:pertimeexpectedreward} for start distributions $\mu$ that converge to $p^\pi(w)$. 

\item 
For MDPs the set $U$ is empty and the statement of Theorem~\ref{theorem:POMDP} recovers Theorem~\ref{theorem:MDP}. 

\item 
In a reinforcement learning setting the agent does not know anything about the world state transitions $\alpha$ nor the observation model $\beta$ a priori, beside from the sets $S$ and $A$. 
In particular, he does not know the set $U$ (nor its cardinality). 
Nonetheless, he can build a hypothesis about $U$ on the basis of observed sensor states, actions, and rewards. 
This can be done using a suitable variant of the Baum-Welch algorithm or inexpensive heuristics, 
without estimating the full kernels $\alpha$ and $\beta$. 
\end{itemize}
\end{remark}

\section{Application to defining low dimensional policy models}
\label{sec:models}

By Theorem~\ref{theorem:POMDP}, there always exists an optimal stationary policy in a $|U|(|A|-1)$-dimensional face of the policy polytope $\Delta_{S,A}$. 
Instead of optimizing over the entire set $\Delta_{S,A}$, 
we can optimize over a lower dimensional subset that contains the $|U|(|A|-1)$-dimensional faces. 
In the following we discuss various ways of defining a differentiable policy model with this property. 


We denote the set of policies in $m$-dimensional faces of the polytope $\Delta_{S,A}$ by 
\begin{equation*}
C_m := \{\pi\in \Delta_{S,A}\colon  \supp(\pi)\leq |S| + m \}. 
\end{equation*}
Note that each policy in $C_m$ can be written as the convex combination of $m+1$ or fewer deterministic policies. 
For example, $C_0=\{\pi^f(a|s)=\delta_{f(s)}(a)\colon f\in A^S \}$ is the set of deterministic policies, 
and $C_{|S|(|A|-1)} = \Delta_{S,A}$ is the entire set of stationary policies. 

\paragraph{Conditional exponential families}
An exponential policy family is a set of policies of the form
\begin{equation*}
\pi_\theta(a|s) = \frac{\exp(\theta^\top F(s,a))}{\sum_{a'} \exp(\theta^\top F(s,a'))},   
\end{equation*}
where $F\colon S\times A\to\mathbb{R}^d$ is a vector of sufficient statistics and $\theta\in\mathbb{R}^d$ is a vector of parameters. 
We can choose $F$ suitably, such that the closure of the exponential family contains $C_m$. 

The \emph{$k$-interaction model} is defined by the sufficient statistics 
\begin{equation*}
F_\lambda(x) = \prod_{i\in\lambda} (-1)^{x_i}, \quad x\in\{0,1\}^n,\quad\lambda\subseteq \{1,\ldots, n\}, 1\leq |\lambda|\leq k.  
\end{equation*}
Here we can identify each pair $(s,a)\in S\times A$ with a length-$n$ binary vector $x\in\{0,1\}^n$, $n = \lceil \log_2(|S||A|)\rceil$. 
Since we do not need to model the marginal distribution over $S$, we can remove all $\lambda$ for which $F_\lambda(s,\cdot)$ is constant for all $s$. 
The $k$-interaction model is \emph{$(2^k-1)$-neighborly}~\citep{Kahle2010}, meaning that, for $2^k-1 \geq |S|+m$ it contains $C_m$ in its closure. 
This results in a policy model of dimension at most 
$\sum_{i=1}^{\lceil \log_2(|S|+m+1) \rceil}{\lceil\log_2(|S||A|)\rceil \choose i}$. 
Note that this is only an upper bound, both on $k$ and the dimension, and usually a smaller model will be sufficient. 

An alternative exponential family is defined by taking $F(s,a)$, $(s,a)\in S\times A$, equal to the vertices of a cyclic polytope. 
The cyclic polytope $C(N,d)$ is the convex hull of $\{x(t_1),\ldots, x(t_N)\}$, where $x(t)=[t,t^2,\ldots, t^d]^\top$, $t_1< t_2<\cdots<t_N$, $N>d\geq 2$. This results in a $\lfloor d / 2\rfloor$-neighborly model. 
Using this approach yields a policy model of dimension $2(|S|+m)$.

\paragraph{Mixtures of deterministic policies}
We can consider policy models of the form 
\begin{equation*}
\pi_\theta(a|s) = \sum_{f\in A^S} \pi^f(a|s) p_\theta(f), 
\end{equation*}
where $\pi^f(a|s) = \delta_{f(s)}(a)$ is the deterministic policy defined by the function $f\colon S\to A$ and 
$p_\theta(f)$ is a model of probability distributions over the set of all such functions. 
Choosing this as a $(m+1)$-neighborly exponential family yields a policy model which contains $C_m$ and, in fact, all mixtures of $m+1$ deterministic policies. This kind of model was proposed in~\cite{ICCN}. 

Identifying each $f\in A^S$ with a length-$n$ binary vector, $n \geq \lceil\log_2( |A|^{|S|}) \rceil$, 
and using a $k$-interaction model with $2^k-1 = m+1$ yields a model of dimension   
$\sum_{i=1}^{\lceil\log_2(m+2)\rceil}{\lceil \log_2( |A|^{|S| } )\rceil \choose i }$. 

Alternatively, we can use a cyclic exponential family for $p_\theta$, 
which yields a policy model of dimension $2(m+1)$. 
If we are only interested in modeling the deterministic policies, $m=0$, then this model has dimension two.

\paragraph{Conditional restricted Boltzmann machines}
A conditional restricted Boltzmann machine (CRBM) is a model of policies of the form 
\begin{equation*}
\pi_\theta(y|x) = \frac{1}{Z(x)}\sum_{z\in\{0,1\}^{n_{\text{hidden}}}} \exp(z^\top V x + z^\top W y + b^\top y + c^\top z),  
\end{equation*}
with parameter $\theta=\{W,V,b,c\}$, $W\in\mathbb{R}^{n_{\text{hidden}}\times n_{\text{out}}}$, $V\in\mathbb{R}^{n_{\text{hidden}}\times n_{\text{in}}}$, 
$b\in \mathbb{R}^{n_{\text{out}}}$, $c\in \mathbb{R}^{n_{\text{hidden}}}$. 
Here we identify each $s\in S$ with a vector $x\in\{0,1\}^{n_{\text{in}}}$, 
$n_{\text{in}} = \lceil\log_2|S|\rceil$, 
and each $a\in A$ with a vector $y\in\{0,1\}^{n_{\text{out}}}$, 
$n_{\text{out}} = \lceil\log_2 |A|\rceil$. 
There are theoretical results on CRBMs~\citep{montufar2015b} showing that they can represent every policy from $C_m$ whenever $n_{\text{hidden}}\geq |S| + m -1$. 
A sufficient number of parameters is thus  
$(|S|+m-1)(\lceil\log_2{|S|}\rceil + \lceil\log_2(|A|)\rceil) + \lceil\log_2(|A|)\rceil$. 

Each of these models has advantages and disadvantages. 
The CRBMs can be sampled very efficiently using a Gibbs sampling approach. 
The mixture models can be very low dimensional, but may have an intricate geometry. 
The $k$-interaction models are smooth manifolds.

\section{Experiments}
\label{sec:experiments}

We run computer experiments to explore the practical utility of our theoretical results. 
We consider the maze from Example~\ref{sec:example}. 
In this example, the set $U$ of sensor states $s$ with $|\supp(\beta(s|\cdot))|>1$ has cardinality two. 
By Theorem~\ref{theorem:POMDP}, there is a $|U|(|A|-1)=6$ stochastic optimal stationary policy.  
As a family of policy models we choose the $k$-interaction models from Section~\ref{sec:models}. 
The number of binary variables is $n=\lceil \log_2(|S||A|)\rceil =6$. 
This results in a sufficient statistics matrix with $64$ columns, out of which we keep only the first $40$, one for each pair $(s,a)$. 
For $k=1,\ldots, 5$, the resulting model dimension is $2, 11, 23, 29, 30$. 
The policy polytope $\Delta_{S,A}$ has dimension $|S|(|A|-1)=30$. 

\newcommand\sca{.3}
\newcommand\scb{.37}
\begin{figure}[h]
\begin{tabular}{cc} 
\includegraphics[clip=true,trim=1.5cm 14.25cm 2cm 7.1cm,scale=\sca]{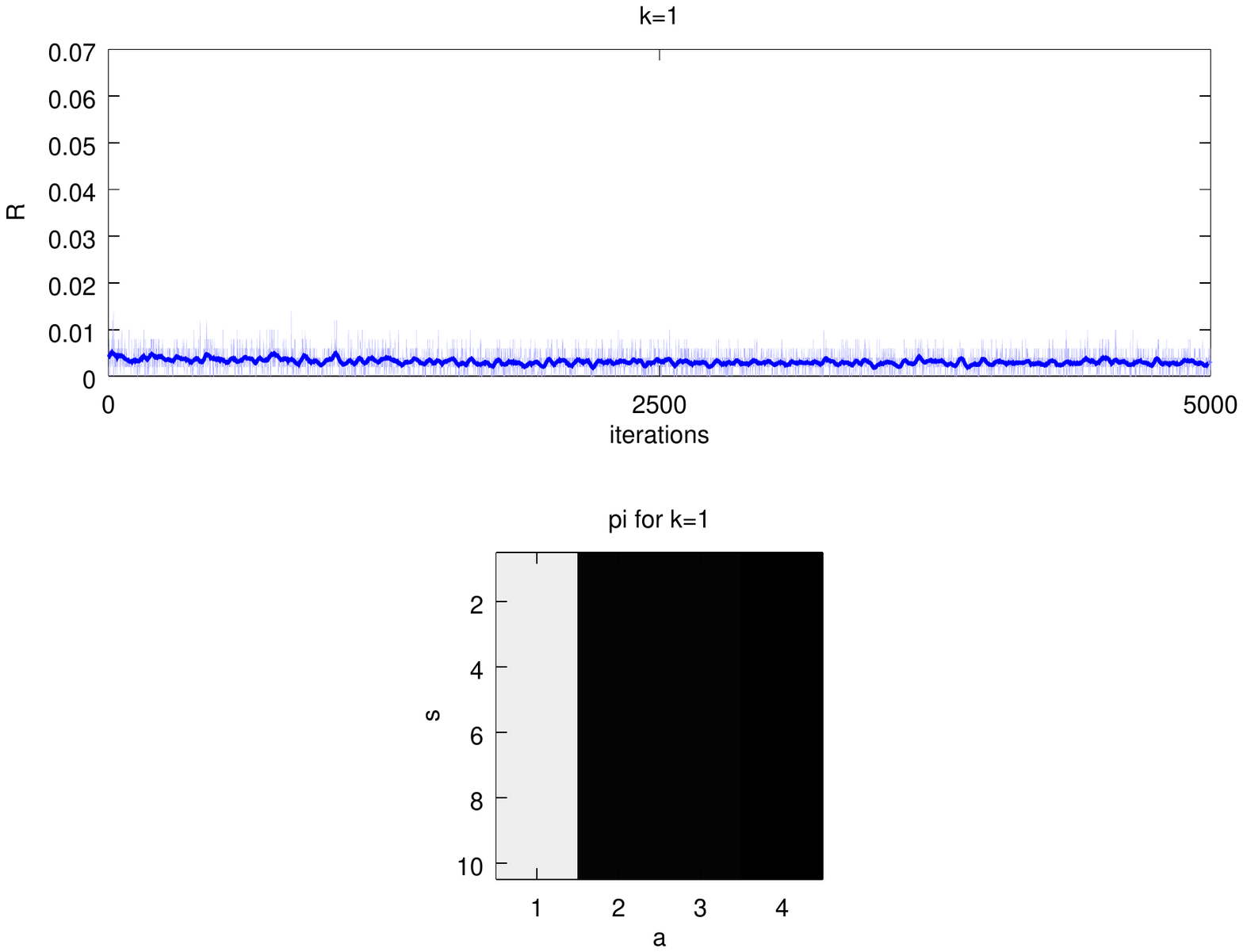}
&
\includegraphics[clip=true,trim=7.5cm 7.05cm 7.5cm 14.25cm,scale=\sca]{fig31resultsthirdrun}
\\
\includegraphics[clip=true,trim=1.5cm 14.25cm 2cm 7.1cm,scale=\sca]{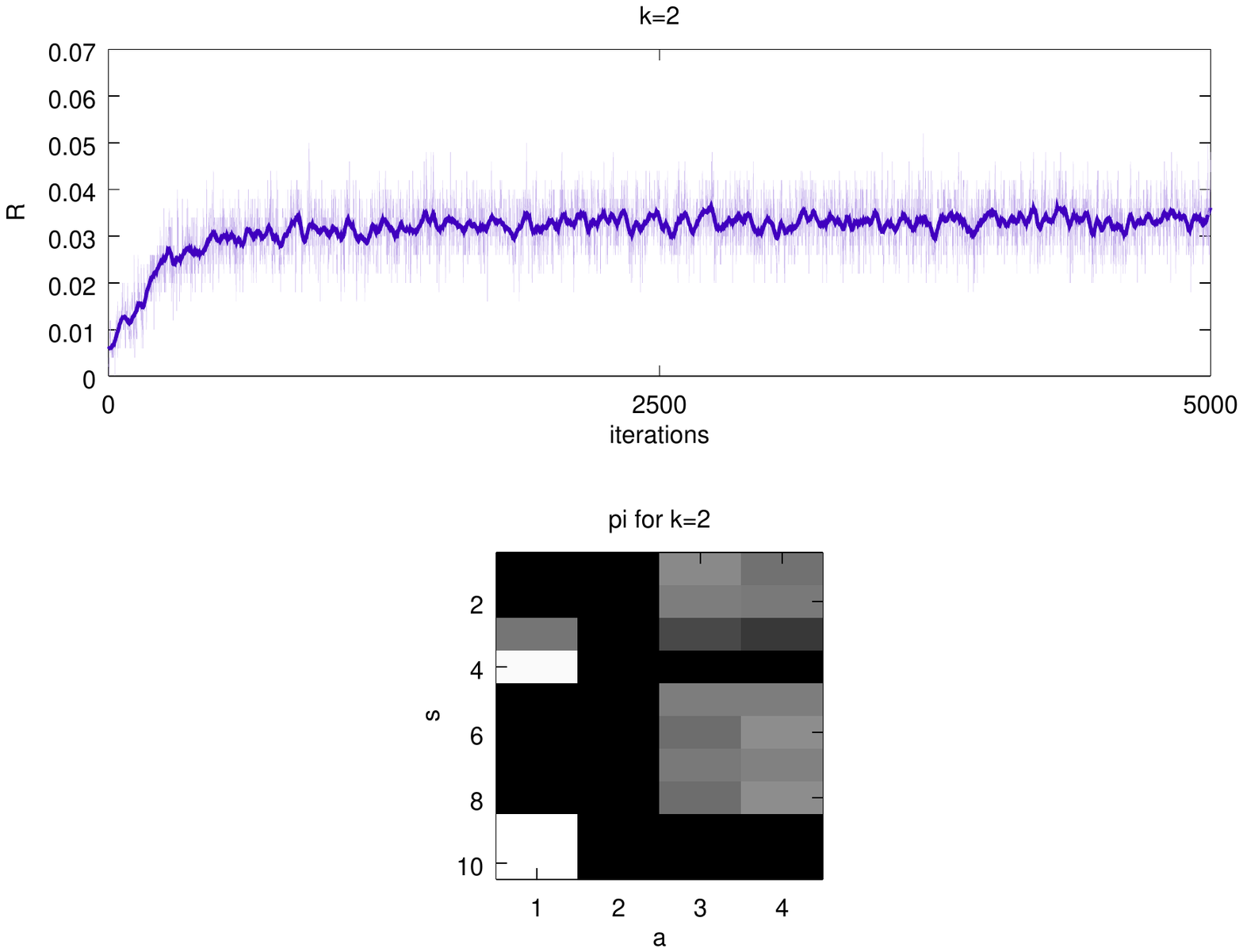}
&
\includegraphics[clip=true,trim=7.5cm 7.05cm 7.5cm 14.25cm,scale=\sca]{fig32resultsthirdrun}
\\
\includegraphics[clip=true,trim=1.5cm 14.25cm 2cm 7.1cm,scale=\sca]{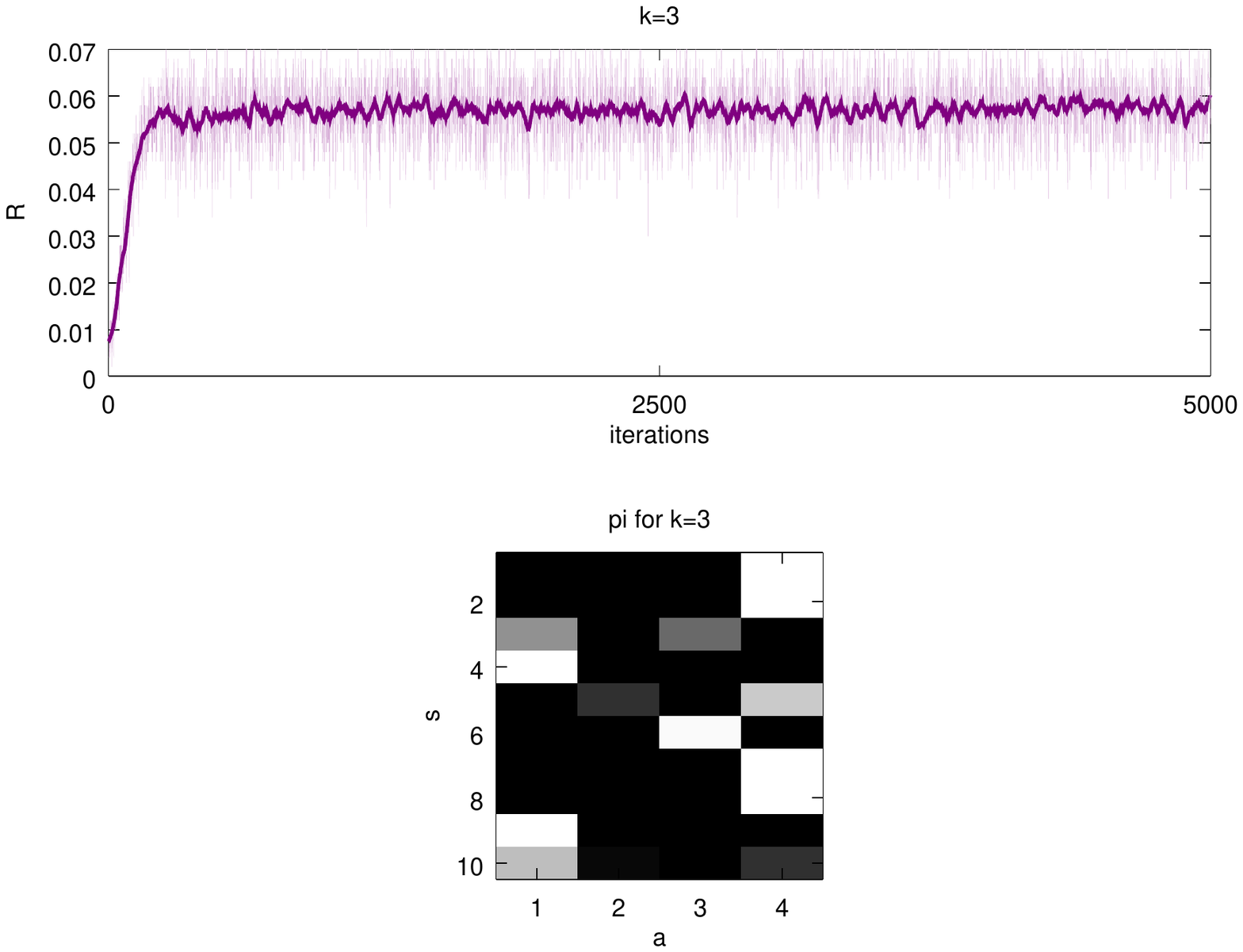}
&
\includegraphics[clip=true,trim=7.5cm 7.05cm 7.5cm 14.25cm,scale=\sca]{fig33resultsthirdrun}
\\
\includegraphics[clip=true,trim=1.5cm 14.25cm 2cm 7.1cm,scale=\sca]{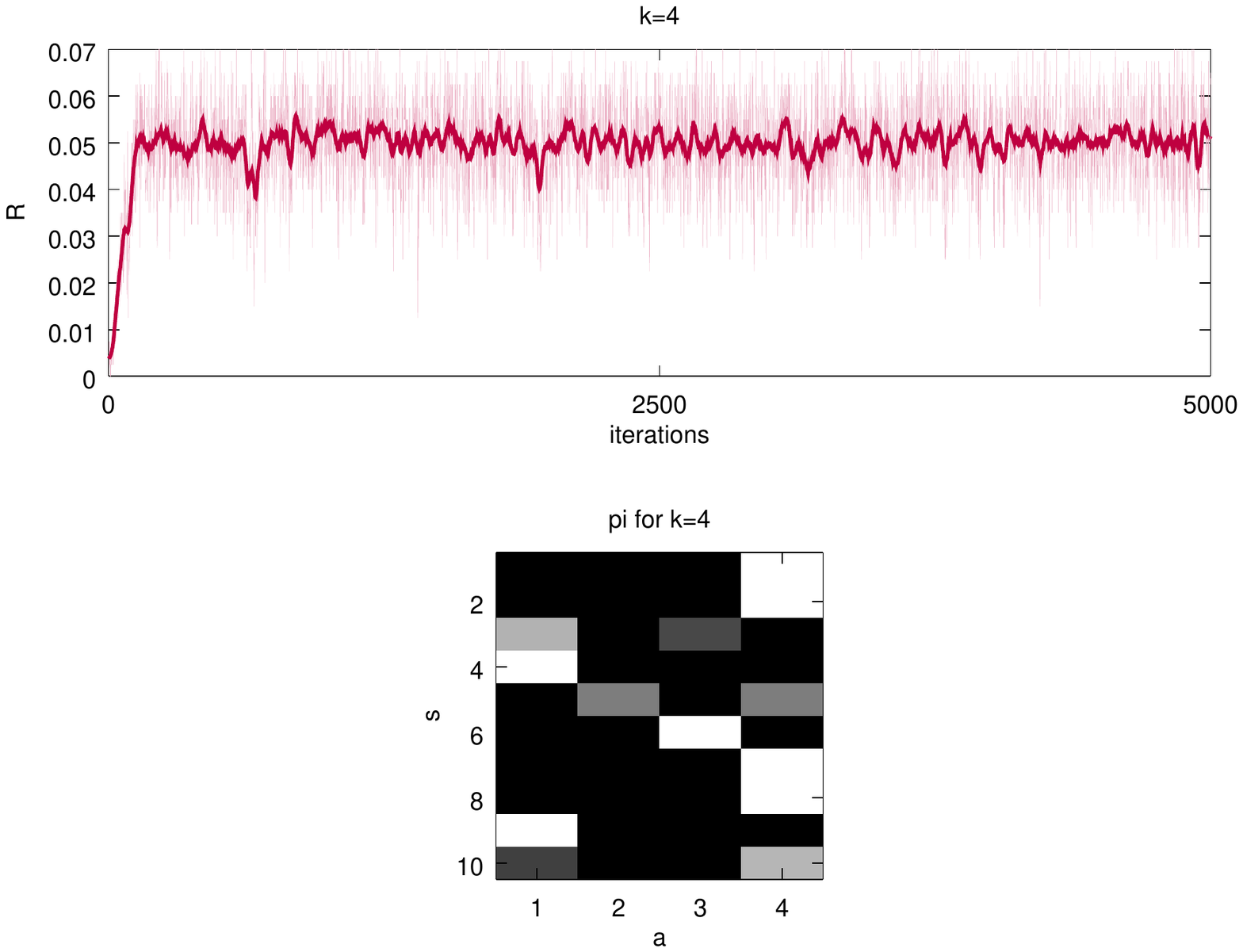}
&
\includegraphics[clip=true,trim=7.5cm 7.05cm 7.5cm 14.25cm,scale=\sca]{fig34resultsthirdrun}
\\
\includegraphics[clip=true,trim=1.5cm 14.25cm 2cm 7.1cm,scale=\sca]{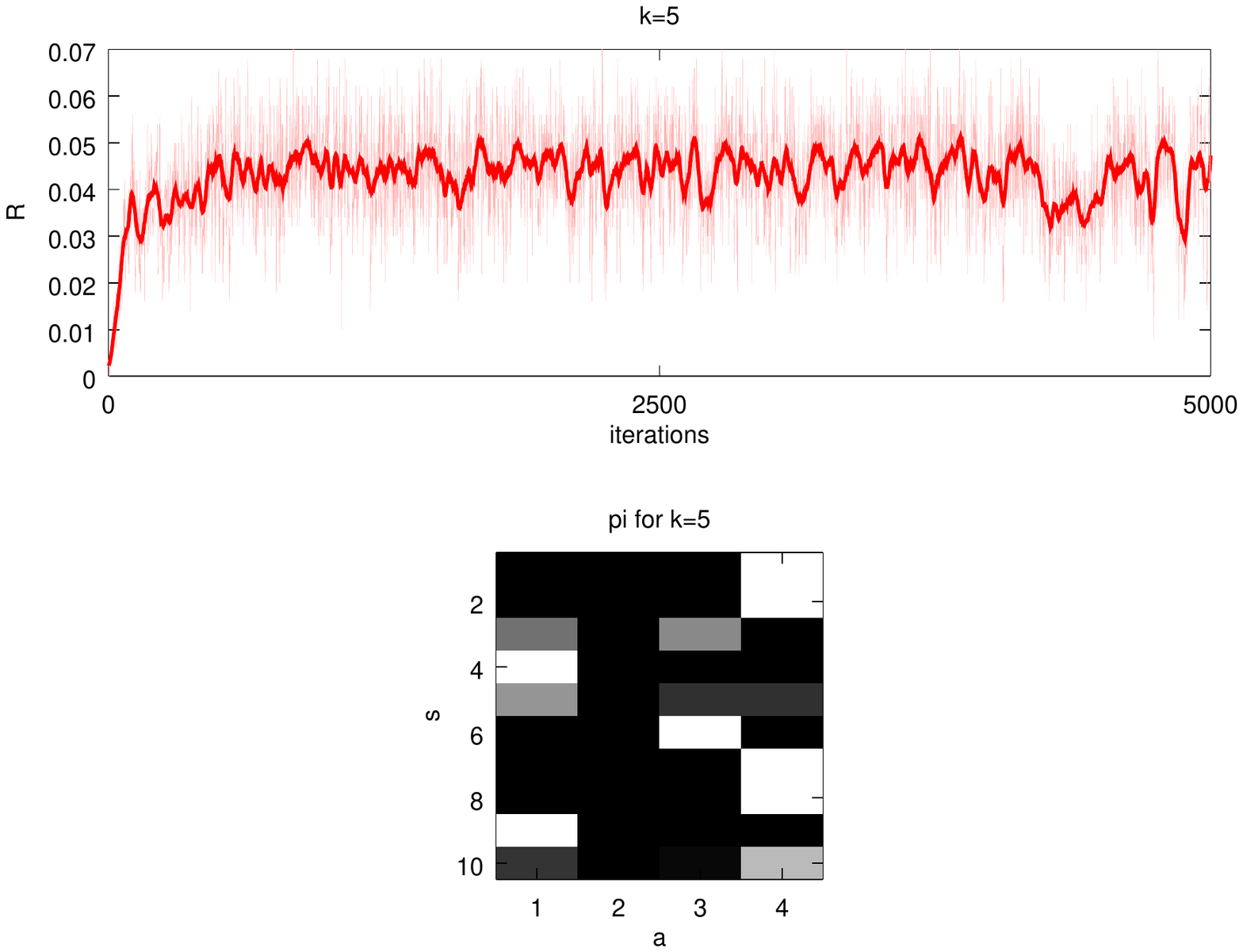}
&
\includegraphics[clip=true,trim=7.5cm 7.05cm 7.5cm 14.25cm,scale=\sca]{fig35resultsthirdrun}
\end{tabular}
\begin{tabular}{c} 
\includegraphics[clip=true,trim=1.5cm 7cm 2cm 7cm,scale=\scb]{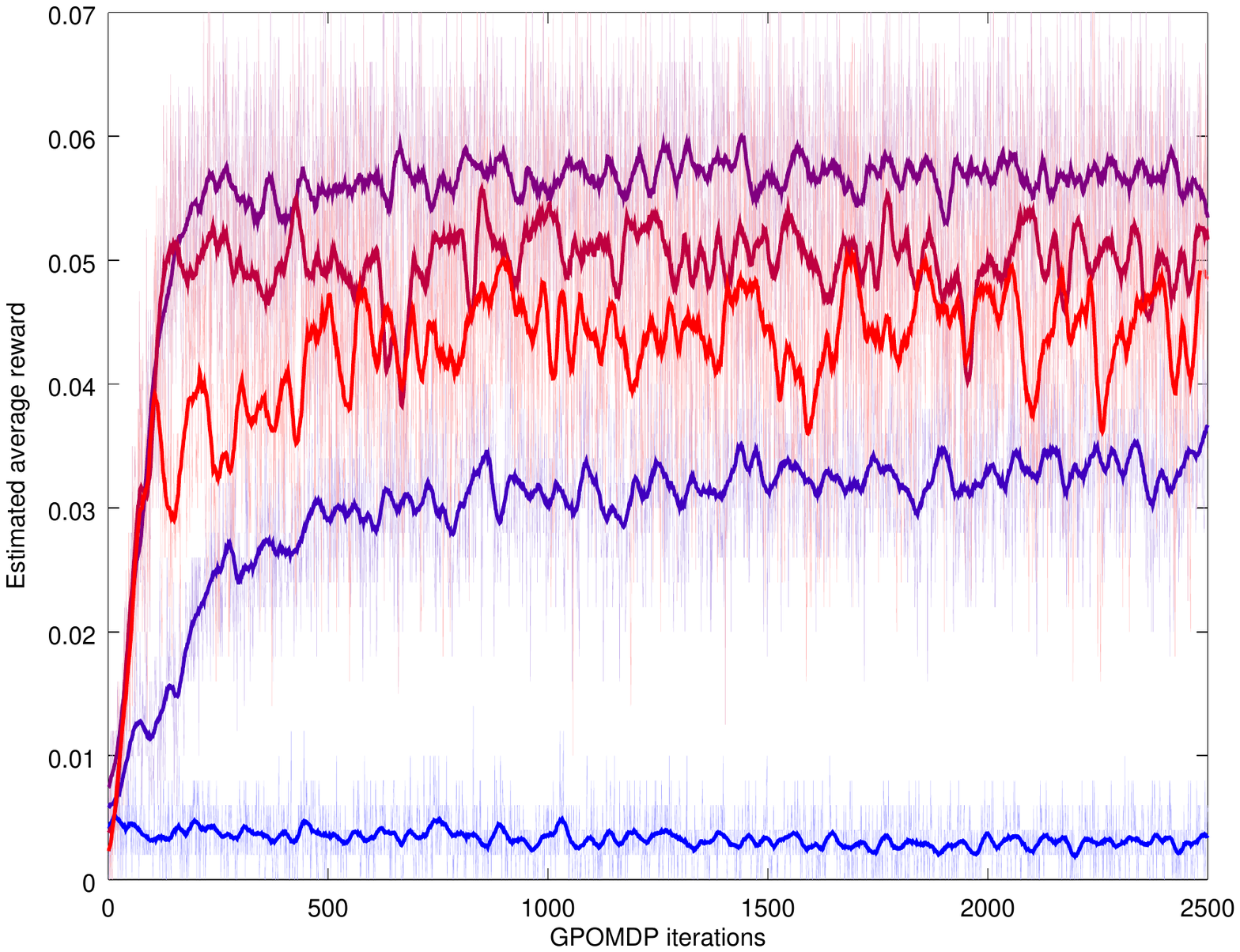}
\\
\includegraphics[clip=true,trim=1.5cm 7cm 2cm 7cm,scale=\scb]{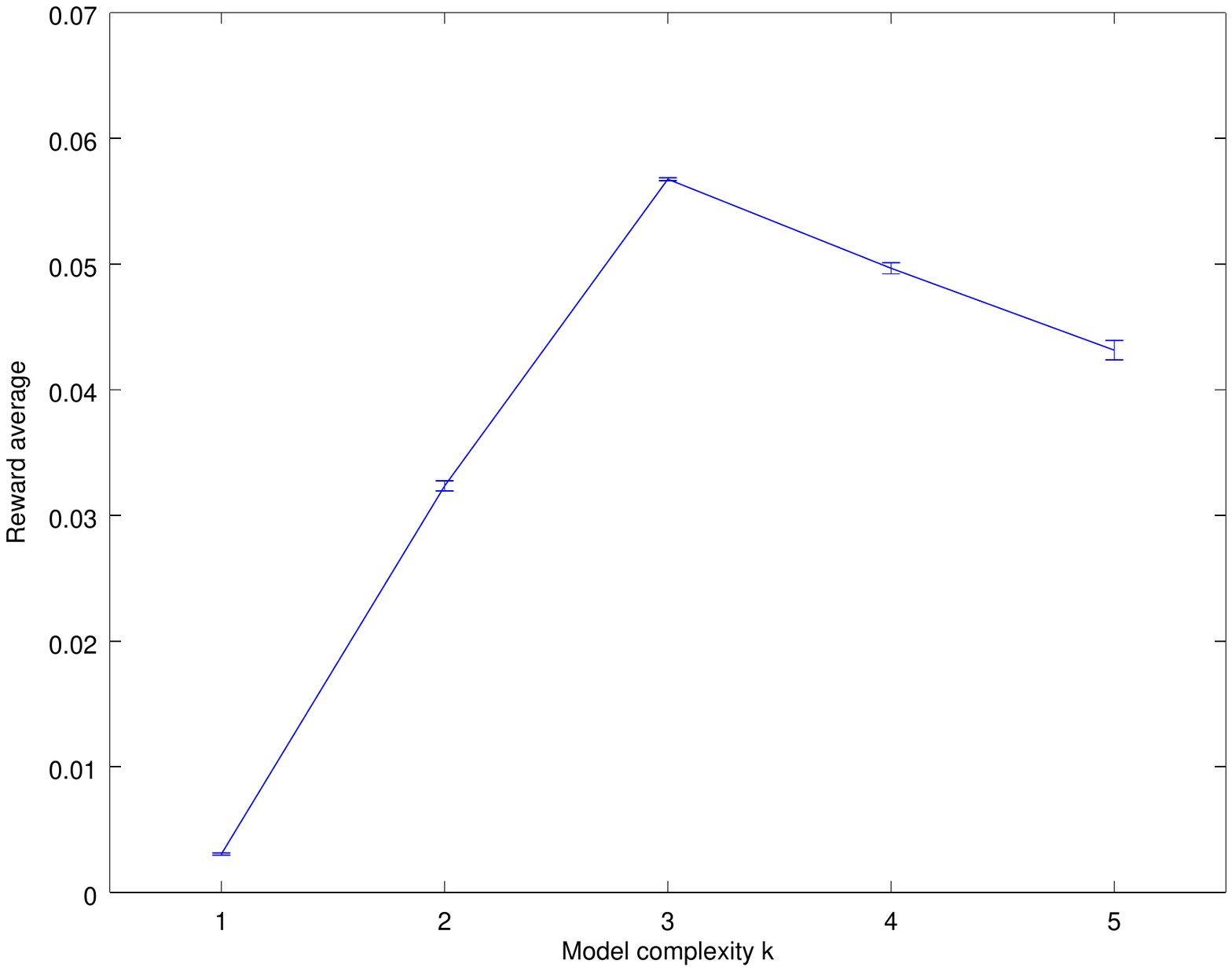}
\end{tabular}
\caption{
Experimental results on the maze Example~\ref{sec:example}. 
The left column shows the average reward learning curves for $k$-interaction models, with $k=1,\ldots, 5$ from top to bottom. 
The second column shows the final policies as matrices of sensor-state action probabilities (white is 1). 
The right column compares all learning curves and shows the overall average reward for all models. 
The model with $k=3$ performs best. 
}
\label{fig:experimentalresults}
\end{figure}

We consider the reinforcement learning problem, where the agent does not know $W, \alpha, \beta, R$ in advance. 
We use stochastic gradient with an implementation of the GPOMDP algorithm~\citep{Baxter:2001:IPE:1622845.1622855} for estimating the gradient. 
We fix a constant learning rate of $1$, 
a time window of $T=1,\ldots, 100$ for each Markov chain gradient and average reward estimation, 
and perform $10\,000$ gradient iterations on a random parameter initialization. 

The results are shown in Figure~\ref{fig:experimentalresults}. 
The first column shows the learning curves for $k=1,\ldots,5$, for the first $2\,500$ gradient iterations. 
Shown is actually the average of the learning curves for $5$ repetitions of the experiment. 
The individual curves are indeed all very similar for each fixed $k$. 
The value shown is the estimated average reward, 
with a running average shown in bold, for better visibility. 
The second column shows the final policy. 
The third column gives a detail of the learning curves and shows the reward averaged over the entire learning process. 

The independence model, with $k=1$, performs very poorly, as it learns a fixed distribution of actions for all sensor states. 
The next model, with $k=2$, performs better, but still has a very limited expressive power. 
All the other models have sufficient complexity to learn a (nearly) optimal policy, in principle. 
However, out of these, the less complex one, with $k=3$, performs best. 
This indicates that the least complex model which is able to learn an optimal policy does learn faster. 
This model has less parameters to explore and is less sensitive to the noise in the stochastic gradient.

\section{Conclusions}
\label{sec:conclusion}

Policy optimization for partially observable Markov decision processes is a challenging problem. 
Scaling is a serious difficulty in most algorithms and theoretical results are scarce on approximative methods. 
This paper develops a geometric view on the problem of finding optimal stationary policies. 
The maximization of the long term expected reward per time step can be regarded as a constrained linear optimization problem with two constraints. 
The first one is a quadratic constraint that arises from the partial observability of the world state. 
The second is a linear constraint that arises from the stationarity of the world state distribution. 
We can decompose the feasible domain into convex pieces, on which the optimization problem is linear. 
This analysis sheds light into the complexity of stationary policy optimization for POMDPs and reveals avenues for designing learning algorithms. 

We show that every POMDP has an optimal stationary policy of limited stochasticity. 
The necessary level of stochasticity is bounded above by the number of sensor states that are ambiguous about the underlying world state, independently of the specific reward function. 
This allows us to define low dimensional models which are guaranteed to contain optimal stationary policies. 
Our experiments show that the proposed dimensionality reduction does indeed allow to learn better policies faster. 
Having less parameters, these models are less expensive to train and less sensitive to noise, while at the same time being able to learn best possible stationary policies. 

\newpage 
\acks{We would like to acknowledge support from the DFG Priority Program Autonomous Learning (DFG-SPP 1527).}

\vskip 0.2in
\bibliography{referenzen}
\bibliographystyle{abbrvnat}

\appendix

\section{The representability constraint}
\label{sec:representability}

Here we investigate the set of representable policies in the underlying MDP; that is, the set of kernels of the form $p^\pi(a|w) = \sum_s \beta(s|w) \pi(a|s)$. 
This set is the image $G = f_\beta(\Delta_{S,A})$ of the linear map 
\begin{equation*}
f_\beta\colon \Delta_{S,A} \to \Delta_{W,A};\; \pi(a|s)\mapsto \sum_{s} \beta(s|w) \pi(a|s). 
\end{equation*}
We are interested in the properties of this set, depending on the observation kernel $\beta$. 

Consider first the special case of a deterministic kernel $\beta^b$, defined by $\beta^b(s|w) = \delta_{b(w)}(s)$, for some function $b\colon W\to S$. 
Then 
\begin{equation*}
f_{\beta^b}(\Delta_{S,A})  = \bigtimes_{s\in S} \sym \Delta_{b^{-1}(s), A}, 
\label{eq:sym}
\end{equation*} 
where $b^{-1}(s)\subseteq W$ is the set of world states that $b$ maps to $s$ and $\sym \Delta_{B, A} := \{ g \in \Delta_{B,A}\colon g(\cdot|w) = p, p\in\Delta_A \}$ is the set of elements of $\Delta_{B,A}$ that consist of one repeated probability distribution. 
This set can be written as a union of Cartesian products, 
\begin{equation*}
f_{\beta^b}(\Delta_{S,A}) 
= \bigcup_{\theta\in \Delta_{U,A}}\Big[ \bigtimes_{s\in U}\Big( \bigtimes_{w\in b^{-1}(s)} \theta(\cdot|s) \Big)\Big]\times
\Big[ \bigtimes_{s\in S \setminus U} \Delta_A\Big], 
\end{equation*}
where $U:= \{s \in S \colon |b^{-1}(s)|>1\}$ is the set of sensor states that can result from several world states. 
For instance, when $\beta$ is the identity function we have $G=\Delta_{W,A} = \bigtimes_{w}\Delta_A$. 

\begin{proposition}\label{proposition2}
Consider a measurement $\beta\in\Delta_{W,S}$ and the map $f_\beta\colon \Delta_{S,A}\to\Delta_{W,A};\;\pi(a|s)\mapsto \sum_s \beta(s|w)\pi(a|s)$. 
Let $U = \{s \in S \colon |\supp(\beta(s|\cdot))|>1\}$ be the sensor states that can be obtained from several world states. 
The set $G=f_\beta(\Delta_{S,A})$ can be written as 
$G = \bigcup_{\theta\in \Theta} G_\theta$, where each $G_\theta$ is a Cartesian product of convex sets, $G_\theta = \bigtimes_{w\in W} G_{\theta,w}$, $G_{\theta,w}\subseteq\Delta_A$ convex, 
and each vertex of $G_\theta$ lies in a face of $G$ of dimension at most $|U|(|A|-1)$. 
\end{proposition}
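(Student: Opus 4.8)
The plan is to build the decomposition explicitly from the splitting $S = U\sqcup(S\setminus U)$ and then analyse the vertices of the pieces. First I would set $\Theta := \Delta_{U,A} = \bigtimes_{s\in U}\Delta_A$, so $\dim\Theta = |U|(|A|-1)$, and for $\theta\in\Theta$ define $G_\theta := f_\beta(\{\theta\}\times \Delta_{S\setminus U,A})$, the image under $f_\beta$ of the slice of $\Delta_{S,A}$ on which the ambiguous rows are frozen to $\theta$. Coverage $G = \bigcup_{\theta\in\Theta} G_\theta$ is immediate, since any $\pi\in\Delta_{S,A}$ factors as a pair $(\theta,\rho)$ with $\theta = \pi|_U$. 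By linearity, $f_\beta(\theta,\rho)(a|w) = c_w(\theta)(a) + \sum_{s\notin U}\beta(s|w)\rho(a|s)$, where $c_w(\theta) := \sum_{s\in U}\beta(s|w)\theta(\cdot|s)$ is a point fixed by $\theta$. The key structural observation is that every $s\notin U$ has $|\supp(\beta(s|\cdot))|\le 1$, so the row $\rho(\cdot|s)$ enters the formula for $p(\cdot|w)$ for at most one $w$; hence the free part decouples over $w$, giving $G_\theta = \bigtimes_{w\in W} G_{\theta,w}$ with $G_{\theta,w} = c_w(\theta) + r_w\Delta_A$ and $r_w := \sum_{s\notin U}\beta(s|w) = 1 - \sum_{s\in U}\beta(s|w)\in[0,1]$. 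Each $G_{\theta,w}$ is a (possibly degenerate) scaled translate of $\Delta_A$ with nonnegative entries summing to $1$, hence a convex subset of $\Delta_A$. For the dimension bound one notes that $G$ is the Minkowski sum of the image of the linear map $\theta\mapsto (c_w(\theta))_w$, of dimension $\le\dim\Theta$, with the fixed product of simplices $\bigtimes_w r_w\Delta_A$, of which $G_\theta$ is a translate; therefore $\dim G - \dim G_\theta \le \dim\Theta = |U|(|A|-1)$.

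Next, for the statement about vertices: a vertex $v = (v_w)_w$ of $G_\theta = \bigtimes_w G_{\theta,w}$ has each $v_w$ a vertex of $G_{\theta,w}$, so $v_w = c_w(\theta) + r_w\,\delta_{a_w}$ for some action $a_w$ when $r_w>0$, and $v_w = c_w(\theta)$ when $r_w = 0$. Such a $v$ is realised by the policy $\pi^\ast$ with $\pi^\ast(\cdot|s) = \theta(\cdot|s)$ for $s\in U$ and $\pi^\ast(\cdot|s) = \delta_{a_{w(s)}}$ for $s\notin U$, where $w(s)$ denotes the unique world state with $\beta(s|w(s))>0$ (for an $s$ with $\beta(s|\cdot)\equiv 0$ any deterministic choice works). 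This $\pi^\ast$ lies in the face of $\Delta_{S,A}$ defined by $\pi(a|s) = 0$ whenever $s\notin U$ and $a\ne a_{w(s)}$, a face of dimension at most $|U|(|A|-1)$; in particular $\pi^\ast$ is $|U|(|A|-1)$-stochastic.

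To promote this to the claimed statement about $G$, I would exhibit a linear functional $\ell$ on $\Delta_{W,A}$ whose pullback $\ell\circ f_\beta$ is maximised over $\Delta_{S,A}$ exactly at a face containing $\pi^\ast$ of dimension at most $|U|(|A|-1)$; since $f_\beta$ is linear and surjective onto $G$, the image of that face is then the face of $G$ exposed by $\ell$, it contains $v = f_\beta(\pi^\ast)$, and its dimension is at most $|U|(|A|-1)$. Writing $\ell$ through a coefficient array $d(w,a)$, its pullback has coefficient $\gamma_s(a) = \sum_w\beta(s|w)d(w,a)$ in row $s$; one wants $d(w,\cdot)$ to have $a_w$ as its unique maximiser for every world state $w$ that is observed through some $s\notin U$, while $\gamma_s(\cdot)$ is maximised on $\supp(\theta(\cdot|s))$ for every $s\in U$, and $d(w,\cdot)$ is unconstrained for the remaining (purely ambiguous) world states, which gives exactly $\sum_{s\notin U}0 + \sum_{s\in U}(|\supp(\theta(\cdot|s))|-1)\le|U|(|A|-1)$ for the dimension of the exposed face of $\Delta_{S,A}$.

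The main obstacle is precisely this last step: choosing $d$ to meet the two requirements simultaneously. In the deterministic-$\beta$ case (treated in the text preceding the proposition) each world state is observed either only through ambiguous sensors or only through unambiguous ones, the two families of constraints live on disjoint coordinate blocks, and taking $d(w,\cdot)$ to be the indicator of $a_w$ on the clean world states and zero elsewhere works verbatim. When a world state is observed through both kinds of sensors the blocks overlap and the indicator contributions must be balanced inside $\gamma_s$; I expect to handle this by spending the freedom available in $d(w,\cdot)$ on the purely ambiguous world states, and, where that does not suffice, by refining the decomposition on the `mixed' world states (subdividing the corresponding $\Delta_A$ factors of $G_\theta$) so that the relevant slices again sit in a common face of $G$. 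Everything else — coverage, the product-of-convex-sets structure, and the dimension bound $\dim G - \dim G_\theta\le\dim\Theta$ — is routine once the role of the `private' unambiguous sensors is made explicit.
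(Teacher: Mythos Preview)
Your construction of the decomposition --- index set $\Theta=\Delta_{U,A}$, pieces $G_\theta=\bigtimes_{w}(c_w(\theta)+r_w\Delta_A)$, and the translate observation giving $\dim G-\dim G_\theta\le|U|(|A|-1)$ --- is exactly the paper's argument. In fact the paper's entire justification for the second clause is the single sentence ``all $G_\theta$ are equal but for addition of a linear projection of $\theta\in\Delta_{U,A}$,'' which is your codimension remark and nothing more; no exposing functional is produced.

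Your unease about the last step is well placed, and the obstacle is not merely technical: the clause ``each vertex of $G_\theta$ lies in a face of $G$ of dimension at most $|U|(|A|-1)$'' is false as stated. Take $W=\{1,2\}$, $A=\{1,2\}$, $S=\{s_0,s_1,s_2\}$ with $\beta(s_0|1)=\beta(s_0|2)=\tfrac12$, $\beta(s_1|1)=\tfrac12$, $\beta(s_2|2)=\tfrac12$, remaining entries zero. Then $U=\{s_0\}$, $|U|(|A|-1)=1$, and in coordinates $(x,y)=(p(1|1),p(1|2))$ one gets $G=\{(\tfrac12(a+b),\tfrac12(a+c)):a,b,c\in[0,1]\}$, a two-dimensional hexagon. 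For $\theta=(\tfrac12,\tfrac12)$ the piece $G_\theta$ is the square $[\tfrac14,\tfrac34]^2$, whose vertex $(\tfrac14,\tfrac14)$ lies in the \emph{interior} of $G$ and hence in no proper face. So neither the paper's one-line proof nor your functional construction can succeed here; the difficulty you isolate in the ``mixed'' case (a world state seen through both ambiguous and unambiguous sensors) is precisely where the statement breaks.

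What the proof of Theorem~\ref{theorem:POMDP} actually needs, however, is only what your second paragraph already establishes: every vertex of $G_\theta$ equals $f_\beta(\pi^\ast)$ for some $\pi^\ast$ that is deterministic on $S\setminus U$, hence $\pi^\ast$ sits in a $|U|(|A|-1)$-dimensional face of $\Delta_{S,A}$. Carry that statement directly into the main argument and drop the detour through faces of $G$; both the paper's phrasing of the proposition and your attempt to rescue it in the third paragraph are chasing a claim that does not hold.
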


\begin{proof}[Proof of Proposition~\ref{proposition2}]
We use as index set $\Theta$ the set of policies $\Delta_{U,A}$. 
We can write 
\begin{gather*}
G = \bigcup_{\theta\in\Delta_{U,A}} G_\theta
\qquad\text{with}\qquad
G_\theta = \bigtimes_{w\in W} \left( \sum_{s\in U}\beta(s|w)\theta(\cdot|s) + \sum_{s\in S\setminus U}\beta(s|w)\Delta_A \right). 
\end{gather*}
This proves the first part of the claim. 
For the second part, note that all $G_\theta$ are equal but for addition of a linear projection of $\theta\in\Delta_{U,A}$. 
\end{proof}

\begin{figure}[t]
	\centering
	\includegraphics{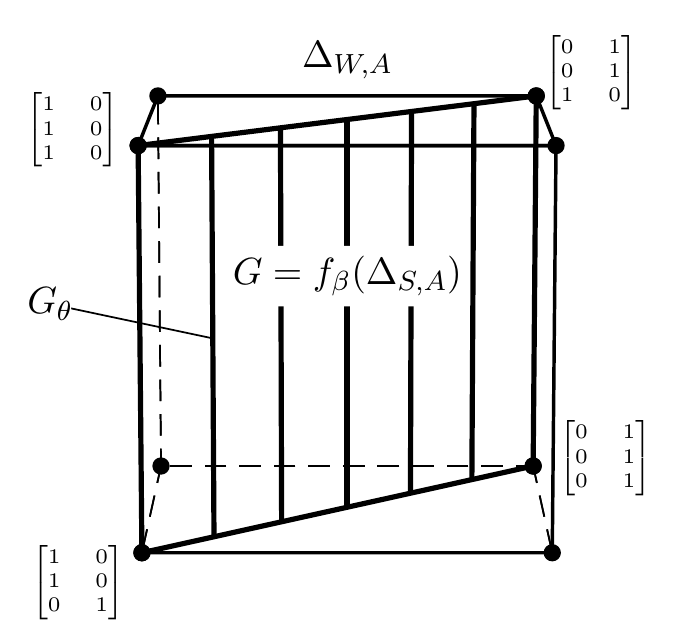}
	\caption{Illustration of Example~\ref{example:prod}. 
		Shown is a decomposition of $G=f_\beta(\Delta_{S,A})\subseteq\Delta_{W,A}$ into a collection of Cartesian products of convex sets, $G_\theta$, $\theta\in \Theta$. 
	}
	\label{figure:convexrepresentable}
\end{figure}

\begin{example}
\label{example:prod}
	Let $W=\{0,1,2\}$, $S=\{0,1\}$, $A=\{0,1\}$. 
	Let $\beta$ map $w=0$ and $w=1$ to $s=0$, and $w=2$ to $s=1$, with probability one. 
	Written as a table $(\beta(s|w))_{w,a}$ this is 
	\begin{equation*}
	\beta=
	\left[\begin{matrix}
	1 & 0 \\ 1 & 0 \\ 0 & 1
	\end{matrix}\right].
	\end{equation*} 
	The policy polytope $\Delta_{S,A}$ is the square with vertices 
	\begin{equation*}
	\left[\begin{matrix}
	1 & 0\\ 1& 0
	\end{matrix}\right], 
	\left[\begin{matrix}
	0 & 1\\ 1 & 0 
	\end{matrix}\right], 
	\left[\begin{matrix}
	0 & 1\\ 0 & 1 
	\end{matrix}\right], 
	\left[\begin{matrix}
	1 & 0\\ 0 & 1 
	\end{matrix}\right]. 
	\end{equation*}
The polytope $G = f_\beta(\Delta_{S,A})\subseteq \Delta_{W,A}$ is the square with vertices 
	\begin{equation*}
	\left[\begin{matrix}
	1 & 0\\1 & 0\\ 1& 0
	\end{matrix}\right], 
	\left[\begin{matrix}
	0 & 1\\0 & 1\\ 1 & 0 
	\end{matrix}\right], 
	\left[\begin{matrix}
	0 & 1\\0 & 1\\ 0 & 1 
	\end{matrix}\right], 
	\left[\begin{matrix}
	1 & 0\\1 & 0\\ 0 & 1 
	\end{matrix}\right]  
	\end{equation*}
and can be written as a union of Cartesian products of convex sets, illustrated in Figure~\ref{figure:convexrepresentable},     
	\begin{equation*}
	G = \bigcup_{\theta\in\Delta_A} G_\theta,\qquad
	G_\theta = \{ \theta \} \times \{ \theta \} \times \Delta_A. 
	\end{equation*}
	\end{example}

As mentioned in Section~\ref{sec:optimalcontrol}, 
the set $F\subseteq\Delta{W\times A}$ of joint distributions that are compatible with the representable conditionals $G=f_\beta(\Delta_{S,A})\subseteq\Delta_{W,A}$, may not be convex. 
In the following we describe large convex subsets of $F$, depending on the properties of~$G$. 
We use the following definitions. 

\begin{definition}
	\begin{itemize}
		\item 
		Given a set of distributions $\Pcal\subseteq\Delta_W$ and a set of kernels $\Gcal\subseteq\Delta_{W,A}$, 
		let 
		\begin{equation*}
		\Pcal\ast \Gcal : =\Big\{ q(w,a) = p(w)g(a|w) \in \Delta_{W\times A} \colon p\in\Pcal, g\in \Gcal \Big\} 
		\end{equation*}
		denote the set of joint distributions over world states and actions, 
		with world state marginals in $\Pcal$ and conditional distributions in $\Gcal$. 
		
		\item 
		For any $V \subseteq W$ let 
		\begin{equation*}
		\Delta_W(V) : = \Big\{ p\in \Delta_W \colon \supp(p):=\{w\in W\colon p(w)>0 \}\subseteq V\Big\} 
		\end{equation*} 
		denote the set of world state distributions with support in $V$. 
		
		\item 
		Given a subset $V\subseteq W$ and a set of kernels $\Gcal\subseteq\Delta_{W,A}$, 
		let 
		\begin{equation*}
		\Gcal|_V := \Big\{ h\in \Delta_{V,A}\colon h(\cdot|w) = g(\cdot|w)\text{ for all }w\in V, \text{for some }g\in \Gcal  \Big\} 
		\end{equation*} 
		denote the set of restrictions of elements of $\Gcal$ to inputs from $V$. 
	\end{itemize}
\end{definition}

The following proposition states that a set of Markov kernels which is a Cartesian product of convex sets, 
with one factor for each input, 
corresponds to a convex set of joint probability distributions. 
Furthermore, if the considered input distributions assign zero probability to some of the inputs, 
then the convex factorization property is only needed for the restriction to the positive-probability inputs. 

\begin{proposition}
	\label{proposition:convexset}
	Let $V\subseteq W$. 
	Let $\Pcal\subseteq\Delta_W(V)$ be a convex set. 
	Let $\Gcal\subseteq\Delta_{W,A}$ satisfy $\Gcal|_V = \bigtimes_{w\in V} \Gcal_w\subseteq\Delta_{V,A}$, 
	where $\Gcal_w\subseteq\Delta_A$ is a convex set for all $w\in V$. 
	Then $\Pcal\ast \Gcal\subseteq \Delta_{W\times A}$ is convex. 
\end{proposition}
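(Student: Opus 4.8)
The plan is to take an arbitrary convex combination of two points in $\Pcal \ast \Gcal$ and exhibit it as a single product $p(w)g(a|w)$ with $p \in \Pcal$ and $g$ a kernel agreeing with some element of $\Gcal$ on $V$. So let $q_0(w,a) = p_0(w)g_0(a|w)$ and $q_1(w,a) = p_1(w)g_1(a|w)$ be two elements of $\Pcal \ast \Gcal$, with $p_0, p_1 \in \Pcal \subseteq \Delta_W(V)$ and $g_0, g_1 \in \Gcal$, and fix $t \in [0,1]$. Set $q_t = (1-t)q_0 + t q_1$. First I would compute its world-state marginal: $p_t(w) := \sum_a q_t(w,a) = (1-t)p_0(w) + t p_1(w)$, which lies in $\Pcal$ by convexity of $\Pcal$, and has support in $V$. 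So it remains to produce the conditional.

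The key step is the standard ``mixture of kernels is a kernel'' computation, done carefully on the support. For each $w \in V$ with $p_t(w) > 0$, define
\begin{equation*}
g_t(a|w) = \frac{(1-t)p_0(w)g_0(a|w) + t p_1(w) g_1(a|w)}{p_t(w)},
\end{equation*}
which is a convex combination of $g_0(\cdot|w)$ and $g_1(\cdot|w)$ with coefficients $\frac{(1-t)p_0(w)}{p_t(w)}$ and $\frac{t p_1(w)}{p_t(w)}$ summing to one, hence an element of $\Delta_A$; moreover since $g_0(\cdot|w), g_1(\cdot|w) \in \Gcal_w$ and $\Gcal_w$ is convex, $g_t(\cdot|w) \in \Gcal_w$. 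By construction $p_t(w) g_t(a|w) = q_t(w,a)$ for these $w$, so $q_t = p_t \ast g_t$ as joint distributions. For $w \in V$ with $p_t(w) = 0$ (equivalently $p_0(w) = p_1(w) = 0$) the value of $g_t(\cdot|w)$ is irrelevant to the joint distribution, so I would simply pick any element of $\Gcal_w$; then $g_t|_V \in \bigtimes_{w\in V}\Gcal_w = \Gcal|_V$, so there is some $g \in \Gcal$ agreeing with $g_t$ on $V$. For $w \in W \setminus V$ set $g_t(\cdot|w) = g(\cdot|w)$. This $g_t$ is then a bona fide kernel in $\Delta_{W,A}$ that restricts to an element of $\Gcal|_V$, hence witnesses $q_t \in \Pcal \ast \Gcal$.

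I expect the only real subtlety to be bookkeeping around zero-probability world states: one must check that the Cartesian-product (factorization) hypothesis on $\Gcal$ is genuinely only needed on $V$, which is exactly why the restriction $\Gcal|_V = \bigtimes_{w\in V}\Gcal_w$ appears in the hypothesis rather than a condition on all of $\Gcal$ — on $W \setminus V$ we never need to interpolate, since every distribution in $\Pcal$ kills those coordinates, so we can freely copy the conditional from any fixed witness $g \in \Gcal$. Everything else is the routine verification that the displayed $g_t$ is well-defined, nonnegative, normalized, and reproduces $q_t$, which I would state but not belabor.
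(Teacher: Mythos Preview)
Your proposal is correct and follows essentially the same approach as the paper's own proof: write the convex combination, identify its $W$-marginal as the corresponding convex combination in $\Pcal$, and recognize the conditional at each $w\in V$ as a convex combination of $g_0(\cdot|w)$ and $g_1(\cdot|w)$ lying in $\Gcal_w$. You are slightly more careful than the paper in explicitly treating the states $w$ with $p_t(w)=0$ and in spelling out how to extend $g_t$ off $V$ via a witness $g\in\Gcal$, but these are refinements of the same argument rather than a different route.
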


\begin{proof}[Proof of Proposition~\ref{proposition:convexset}]
	We need to show that, given any two distributions $q'$ and $q''$ in $\Pcal\ast \Gcal$, and any $\lambda\in[0,1]$, 
	the convex combination $q = \lambda q' + (1-\lambda)q''$ lies in $\Pcal\ast \Gcal$. 
	This is the case if and only if $q(w,a) = p(w) g(a|w)$ for some $p\in \Pcal$ and some $g\in \Delta_{W,A}$ with $g|_V\in\Gcal|_V$. 
	We have 
	\begin{align*}
	q(w,a)&=\lambda q'(w,a) + (1-\lambda) q''(w,a) \\
	&= 
	\lambda p'(w)g'(a|w) + (1-\lambda) p''(w)g''(a|w)\\
	&= 
	(\lambda p'(w) + (1-\lambda) p''(w)) \\
	& \phantom{=\;}\times \Big(\frac{\lambda p'(w)}{\lambda p'(w) + (1-\lambda) p''(w)} g'(a|w)  
	+ \frac{(1-\lambda) p''(w)}{\lambda p'(w) + (1-\lambda) p''(w)} g''(a|w) \Big). 
	\end{align*}
	This shows that $q(w,a) = p(w) g(a|w)$, where $p(w) = \lambda p'(w) + (1-\lambda)p''(w) \in \Pcal$ and 
	$g(\cdot|w) = \lambda_w g'(\cdot|w) + (1-\lambda_w) g''(\cdot|w) \in \Gcal_w$, $\lambda_w = \frac{\lambda p'(w)}{\lambda p'(w) + (1-\lambda) p''(w)}$, for all $w\in V$. Hence $g(a|w)|_V\in \Gcal|_V$ and $q\in \Pcal\ast \Gcal$. 
\end{proof}

The set of Markov kernels $\Delta_{W,A}$ is a Cartesian product of convex sets $\Delta_{W,A} = \bigtimes_{w\in W} \Delta_A$. 
The set of joint distributions $\Delta_{W\times A} = \Delta_W \ast \Delta_{W,A}$ is a simplex, which is a convex set. 

A general set $\Gcal\subseteq\Delta_{W,A}$ is not necessarily convex, let alone a Cartesian product of convex sets. 
However, it can always be written as a union of Cartesian products of convex sets of the form 
\begin{equation*}
\Gcal = \bigcup_{\theta\in\Theta} \Gcal_\theta, \quad 
\Gcal_\theta = \bigtimes_{w\in W} \Gcal_{\theta,w}, \quad
\Gcal_{\theta,w} \subseteq\Delta_A \text{ convex}. 
\end{equation*}
For instance, one can always use 
$\Theta = \Gcal$, $\Gcal_{\theta =g } = \{g\}$, $\Gcal_{\theta=g , w} = \{g(\cdot|w)\}$. 
Proposition~\ref{proposition:convexset}, together with this observation, implies that 
given any $\Gcal\subseteq\Delta_{W,A}$ and a convex set $\Pcal\subseteq\Delta_W$, 
the set of joint distributions $\Fcal = \Pcal \ast \Gcal\subseteq\Delta_{W\times A}$ is a union of convex sets $\Fcal_\theta = \Pcal \ast \Gcal_\theta$, $\theta\in \Theta$. 
The situation is illustrated in Example~\ref{example:conv}. 

\begin{example}
\label{example:conv}
Consider the settings from Example~\ref{example:prod}. 
	The set $F = \Delta_W \ast G\subseteq \Delta_{W\times A}$ is the union of following sets:
	\begin{equation*}
	F_\theta = \Delta_{W}\ast G_\theta, \quad \theta \in \Delta_A. 
	\end{equation*}
	Each $F_\theta\subseteq F\subseteq \Delta_{W\times A}$ is a polytope with vertices
	\begin{equation*}
	\left[\begin{matrix}
	\theta & (1-\theta)\\0 & 0 \\ 0 & 0 
	\end{matrix}\right], 
	\left[\begin{matrix}
	0 & 0 \\ \theta & (1-\theta)\\ 0 & 0 
	\end{matrix}\right], 
	\left[\begin{matrix}
	0 & 0 \\ 0 & 0 \\ 1 & 0 
	\end{matrix}\right], 
	\left[\begin{matrix}
	0 & 0 \\ 0 & 0 \\ 0 & 1
	\end{matrix}\right]. 
	\end{equation*}
\end{example}

\section{The stationarity constraint}
\label{sec:stationarity}

In the objective function, the marginal distribution over world states is the stationary distribution of the world state transition kernel, and not some arbitrary distribution over world states. 
The coupling of transition kernels and marginal distributions can be described in terms of the polytope $\Xi$ of joint distributions in $\Delta_{W\times W}$ with equal first and second marginals. This is given by 
\begin{equation*}
\Xi := \Big\{ p(w,w')\in \Delta_{W\times W}\colon \sum_{w'} p(\cdot, w') = \sum_w p(w,\cdot) \Big\}. 
\end{equation*} 
The second marginal is the result of applying the conditional as a Markov kernel to the first marginal; that is, $\sum_{w}p(w)p(w'|w) = p(w')$. 
Hence equality of both marginals means that the marginal is a stationary distribution of the transition $p(w'|w)$. 

The polytope $\Xi$ has been studied by~\citet{Weis2010} under the name Kirchhoff polytope. 
The vertices of $\Xi$ are the joint distributions of the following form. 
For any non-empty subset $\Wcal\subseteq W$ and a cyclic permutation $\sigma \colon \Wcal\to\Wcal$, there is a vertex defined by 
\begin{equation*}
c_{\Wcal, \sigma}(w,w') := 
\frac{1}{|\Wcal|}
\begin{cases}
1, & \text{if } \sigma(w) = w'\\
0,& \text{otherwise}
\end{cases}.
\label{equation:verticesXi}
\end{equation*}
The dimension is $\dim(\Xi)= |W|(|W|-1)$. To see this, note that each strictly positive transition $p(w|w)$ is trivially a primitive Markov kernel and hence it has a unique stationary limit distribution. 
In turn, the set of strictly positive transitions, which has dimension  has dimension $|W|(|W|-1)$, corresponds to the relative interior of $\Xi$.  

\begin{example} 
	\label{example:Xi}
	Let $W=\{0,1\}$. 
	The non-empty subsets of $W$ are $\mathcal{W}=\{0\}, \{1\},\{0,1\}$, 
	and the cyclic permutations of these subsets are $0\to 0$, $1\to 1$, $(0, 1) \to (1,0)$. 
	The Kirchhoff polytope $\Xi$ is the triangle enclosed by the points   
	\begin{equation*}
	c_{\{0\},0\to 0 } = \left[\begin{matrix}
	1 & 0 \\ 0 & 0
	\end{matrix}\right],\quad
	c_{\{1\},1\to 1 } =
	\left[\begin{matrix}
	0 & 0 \\ 0 & 1
	\end{matrix}\right],\quad
	c_{\{0,1\},(0,1)\to(1,0) } =
	\left[\begin{matrix}
	0 & 1/2 \\ 1/2 & 0
	\end{matrix}\right].  
	\end{equation*}
	Every strictly positive joint distribution $p(w,w')$ corresponds to a marginal $p(w)$ and a conditional distribution $p(w'|w)$. 
	Each point in the interior of $\Xi$ corresponds to a point in the interior of $\Delta_{W,W}$. 
	The situation is illustrated in Figure~\ref{figure:marginals}. 
\end{example}

\begin{figure}
	\centering
	\includegraphics{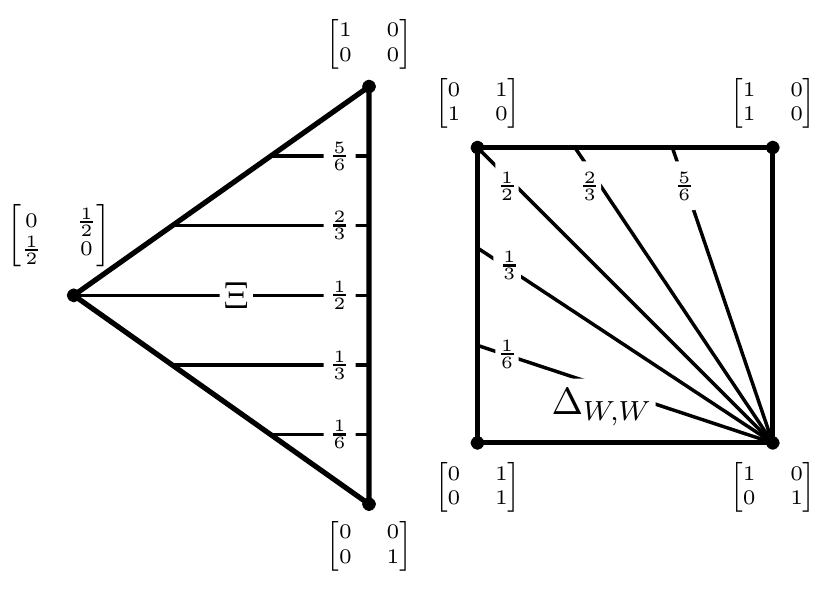}  
	\caption{The polytope $\Xi\subseteq\Delta_{W\times W}$, $W=\{0,1\}$, discussed in Example~\ref{example:Xi}. 
		Subsets with first marginals satisfying $p(w=0)=\tfrac16, \ldots, \tfrac56$ are highlighted. 
		The right panel shows the corresponding sets of conditional distributions in $\Delta_{W,W}$. 
	}
	\label{figure:marginals}
\end{figure}

\section{Determinism of optimal stationary policies for discounted rewards}
\label{sec:discounted}

\begin{theorem}\label{theorem:POMDP2}
Consider a POMDP $(W,S,A,\alpha,\beta,R)$, a discount factor $\gamma\in(0,1)$, and a start distribution $\mu$. 
Then there is a stationary policy $\pi^\ast\in \Delta_{S,A}$ that is deterministic on each $s\in S$ with $|\supp(\beta(s|\cdot))|\leq 1$ and satisfies $\Rcal_{\mu}^\gamma(\pi^\ast)\geq \Rcal_{\mu}^\gamma(\pi)$ for all $\pi\in \Delta_{S,A}$. 
\end{theorem}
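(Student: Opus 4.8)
The plan is to start from an optimal stationary policy and derandomize it one unambiguous sensor state at a time, using the classical policy improvement theorem inside the underlying MDP. An optimal stationary policy $\pi$ exists because $\Delta_{S,A}$ is compact and $\pi\mapsto\Rcal_\mu^\gamma(\pi)$ is continuous. Write $\bar\pi(a|w) := p^\pi(a|w) = \sum_s \pi(a|s)\beta(s|w)$ for the induced (in general randomized) Markov policy in the MDP $(W,A,\alpha,R)$, and let $V^{\bar\pi}$ and $Q^{\bar\pi}(w,a) = R(w,a) + \gamma\sum_{w'}\alpha(w'|w,a)V^{\bar\pi}(w')$ be its discounted value and state--action value functions; since the world-state/action process under $\pi$ coincides in law with that under $\bar\pi$, we have $\Rcal_\mu^\gamma(\pi) = \sum_w \mu(w)V^{\bar\pi}(w)$.

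Next I would fix a sensor state $s$ with $|\supp(\beta(s|\cdot))|\leq 1$. If $\supp(\beta(s|\cdot))=\emptyset$, the row $\pi(\cdot|s)$ has no effect on $\bar\pi$, hence none on $\Rcal_\mu^\gamma$, and may be replaced by an arbitrary deterministic distribution. Otherwise $\supp(\beta(s|\cdot))=\{w_s\}$ for a single world state $w_s$. Replacing $\pi(\cdot|s)$ by $\delta\in\Delta_A$ and leaving all other rows of $\pi$ fixed produces a policy $\pi'$ whose induced Markov policy satisfies $\bar\pi'(\cdot|w)=\bar\pi(\cdot|w)$ for every $w\neq w_s$ (because $\beta(s|w)=0$ there) and $\bar\pi'(a|w_s)=\bar\pi(a|w_s)+\beta(s|w_s)\bigl(\delta(a)-\pi(a|s)\bigr)$. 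This localization to the single row $w_s$ is exactly where the unambiguity hypothesis is used.

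The key step is to take $\delta=\delta_{a^\ast}$ for some $a^\ast\in\argmax_a Q^{\bar\pi}(w_s,a)$. Then $\sum_a \bar\pi'(a|w)Q^{\bar\pi}(w,a)=V^{\bar\pi}(w)$ for all $w\neq w_s$, while $\sum_a \bar\pi'(a|w_s)Q^{\bar\pi}(w_s,a)-V^{\bar\pi}(w_s)=\beta(s|w_s)\bigl(\max_a Q^{\bar\pi}(w_s,a)-\sum_a \pi(a|s)Q^{\bar\pi}(w_s,a)\bigr)\geq 0$, since the last sum is a convex combination of the $Q$-values. By the policy improvement theorem for randomized stationary policies (the Bellman operator $T^{\bar\pi'}$ is monotone and a $\gamma$-contraction with fixed point $V^{\bar\pi'}$, and $T^{\bar\pi'}V^{\bar\pi}\geq V^{\bar\pi}$ forces the iterates $(T^{\bar\pi'})^n V^{\bar\pi}$ to increase to $V^{\bar\pi'}$), we get $V^{\bar\pi'}\geq V^{\bar\pi}$ pointwise, hence $\Rcal_\mu^\gamma(\pi')\geq \Rcal_\mu^\gamma(\pi)$. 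Since $\pi$ is optimal, $\pi'$ is optimal too and is now deterministic on $s$.

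Finally I would iterate this over all $s\in S$ with $|\supp(\beta(s|\cdot))|\leq 1$. The derandomization at $s$ alters only the row $\pi(\cdot|s)$, so it never disturbs the determinism already achieved at previously processed sensor states; after finitely many steps we obtain an optimal stationary policy $\pi^\ast$ deterministic on every such $s$, which also shows $\pi^\ast$ is $|U|(|A|-1)$-stochastic. The only real subtlety is the bookkeeping just described, together with the fact that $Q^{\bar\pi}$ must be recomputed with respect to the current policy at each iteration; the one-step improvement inequality and the convergence of $(T^{\bar\pi'})^n V^{\bar\pi}$ are routine and I would not belabor them.
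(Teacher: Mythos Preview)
Your proof is correct and follows essentially the same route as the paper: derandomize the policy one unambiguous sensor state at a time by replacing $\pi(\cdot|s)$ with a point mass on $\argmax_a Q^{\bar\pi}(w_s,a)$, check that this only perturbs the induced Markov policy at the single world state $w_s$, and invoke policy improvement to conclude $V^{\bar\pi'}\geq V^{\bar\pi}$ pointwise. The only cosmetic differences are that you start from an optimal policy (via compactness/continuity) and phrase the improvement step through the monotone $\gamma$-contraction $T^{\bar\pi'}$, whereas the paper starts from an arbitrary policy and unrolls the Bellman recursion explicitly; neither changes the substance of the argument.
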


We will prove Theorem~\ref{theorem:POMDP2} using a policy improvement argument. 
The world state value function of a policy $\pi$ is the unique solution of the Bellman equation 
\begin{equation*}
V^\pi(w) = \sum_a p^\pi(a|w)\Big[R(w,a) +  \gamma \sum_{w'}\alpha(w'|w,a) V^\pi(w')\Big]. 
\end{equation*}
The action value function is given by 
\begin{equation*}
Q^\pi(w,a) = R(w,a) + \gamma \sum_{w'}\alpha(w'|w,a)V^\pi(w'). 
\end{equation*}
These definitions make sense both for MDPs and POMDPs. However, while for MDPs there is a stationary policy that maximizes the value of each world state simultaneously, for POMDPs the same is not true in general.

\begin{lemma}[POMDP policy improvement]
\label{lemma:policyimprovement}
Let $\pi, \pi' \in \Delta_{S,A}$ be two policies with 
$\sum_a p^{\pi'}(a|w) Q^\pi(w,a) \geq V^\pi(w)$ for all $w$. 
Then $V^{\pi'}(w)\geq V^{\pi}(w)$ for all $w$. 
\end{lemma}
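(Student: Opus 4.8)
The plan is to establish this classical-style policy improvement inequality by iterating the one-step improvement and passing to the limit, exactly as in the standard MDP argument but taking care that the policies act through the induced kernels $p^\pi(a|w)$ and $p^{\pi'}(a|w)$ rather than directly through $\pi,\pi'$. Write the Bellman operator associated with $\pi$ acting on functions $g\colon W\to\mathbb{R}$ as
\begin{equation*}
(T^\pi g)(w) = \sum_a p^\pi(a|w)\Big[R(w,a) + \gamma\sum_{w'}\alpha(w'|w,a)\, g(w')\Big],
\end{equation*}
and similarly $T^{\pi'}$. Both $T^\pi$ and $T^{\pi'}$ are monotone (if $g\le h$ pointwise then $T^\pi g\le T^\pi h$, since the coefficients $p^\pi(a|w)\alpha(w'|w,a)$ are nonnegative) and are $\gamma$-contractions in the sup norm, with unique fixed points $V^\pi$ and $V^{\pi'}$ respectively.

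First I would rewrite the hypothesis in operator form. The quantity $\sum_a p^{\pi'}(a|w)Q^\pi(w,a)$ is precisely $(T^{\pi'}V^\pi)(w)$, because $Q^\pi(w,a) = R(w,a)+\gamma\sum_{w'}\alpha(w'|w,a)V^\pi(w')$ is exactly the bracket appearing in $T^{\pi'}$. Also $V^\pi = T^\pi V^\pi$ by the Bellman equation. So the hypothesis says $T^{\pi'}V^\pi \ge V^\pi$ pointwise. Next I would iterate: applying the monotone operator $T^{\pi'}$ to both sides of $T^{\pi'}V^\pi\ge V^\pi$ repeatedly gives
\begin{equation*}
(T^{\pi'})^{n+1}V^\pi \ge (T^{\pi'})^{n}V^\pi \ge \cdots \ge T^{\pi'}V^\pi \ge V^\pi
\end{equation*}
for all $n\ge 0$, so the sequence $((T^{\pi'})^n V^\pi)_n$ is nondecreasing and bounded below by $V^\pi$. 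By the contraction property it converges to the unique fixed point $V^{\pi'}$, and since every term of the sequence is $\ge V^\pi$, the limit satisfies $V^{\pi'}\ge V^\pi$ pointwise, which is the claim.

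There is no real obstacle here; the only point requiring a line of care is the justification that $T^{\pi'}$ is well-defined and contractive on $(W,S,A,\alpha,\beta)$-data, i.e.\ that the induced kernel $p^{\pi'}(a|w)=\sum_s\pi'(a|s)\beta(s|w)$ is a genuine Markov kernel $W\to\Delta_A$ (it is, being a convex combination of the rows of $\pi'$), so that $T^{\pi'}$ has the same monotonicity and contraction properties used for MDPs. I would state that once and then run the iteration. An alternative, essentially equivalent, route is the telescoping identity $V^{\pi'}(w) - V^{\pi}(w) = \sum_{t\ge 0}\gamma^t\,\mathbb{E}^{\pi'}_{w}\big[(T^{\pi'}V^\pi)(w_t) - V^\pi(w_t)\big]$, where the expectation is over trajectories generated from $w$ using $p^{\pi'}$; each summand is nonnegative by hypothesis, giving the result directly. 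I would present the operator-iteration version as the main proof since it is the shortest and needs no explicit trajectory bookkeeping.
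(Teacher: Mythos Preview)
Your argument is correct and is essentially the same as the paper's proof: both iterate the one-step improvement $T^{\pi'}V^\pi\ge V^\pi$ and pass to the limit. The paper writes out the unrolling explicitly in terms of expectations over trajectories under $\pi'$ (which is your ``alternative'' telescoping route), whereas you package the same iteration via the monotone $\gamma$-contraction $T^{\pi'}$; these are the same computation in different notation.
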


\begin{proof}[Proof of Lemma~\ref{lemma:policyimprovement}]
The proof follows closely the arguments of the MDP deterministic policy improvement theorem presented by~\cite{suttonbarto98}. 
\begin{align*}
V^\pi(w) 
\leq& \sum_a p^{\pi'}(a|w)Q^\pi(w,a)\\
=& \mathbb{E}_{\pi',w_0=w}\Big[R(w_0, a_0) + \gamma V^\pi(w_1) \Big]\\
\leq& \mathbb{E}_{\pi',w_0=w}\Big[R(w_0, a_0) + \gamma \sum_{a}p^{\pi'}(a|w_1)Q^\pi(w_1,a) \Big]\\
=& \mathbb{E}_{\pi',w_0=w}\Big[R(w_0, a_0) + \gamma R(w_1,a_1) + \gamma^2 V^\pi(w_2) \Big]\\
\leq&\lim_{T\to\infty} \mathbb{E}_{\pi',w_0=w}\Big[\sum_{t=0}^{T-1} \gamma^t R(w_t, a_t) \Big] 
=  V^{\pi'}(w).\qedhere
\end{align*}
\end{proof}

\begin{proof}[Proof of Theorem~\ref{theorem:POMDP2}]
Consider any policy $\pi\in\Delta_{S,A}$. 
Consider some $\tilde s\in S$ with $\supp(\beta(\tilde s|\cdot))=\tilde w$ and  
$\tilde a \in \argmax_a Q^\pi(\tilde w,a)$. 
We define an alternative policy by $\pi'(a|s)=\pi(a|s)$, $s\neq \tilde s$, and $\pi'(\tilde a| \tilde s)=1$. 
This policy is deterministic on $\tilde s$. 
We have 
\begin{equation*}
p^{\pi'}(a|w) = \sum_s\beta(s|w)\pi'(a|s) = \sum_{s\neq\tilde s}\beta(s|w)\pi'(a|s) = p^\pi(a|w), \quad \text{for all }w\neq \tilde w, 
\end{equation*} 
and 
\begin{equation*}
p^{\pi'}(a|\tilde w) 
= \sum_s\beta(s|\tilde w)\pi'(a|s) 
= \sum_{s\neq \tilde s}\beta(s|\tilde w)\pi(a|s) + \beta(\tilde s|\tilde w)\delta_{a,\tilde a}.  
\end{equation*}
In turn, 
\begin{align*}
\sum_a p^{\pi'}(a|w) Q^{\pi}(w,a) 
= \sum_a p^{\pi}(a|w) Q^{\pi}(w,a) 
= V^\pi(w),\quad\text{for all }w\neq\tilde w, 
\end{align*}
and 
\begin{align*}
\sum_a p^{\pi'}(a|\tilde w) Q^{\pi}(\tilde w,a) 
=& \sum_{a} 
\Big[\sum_{s\neq \tilde s}\beta(s|\tilde w)\pi(a|s) + \beta(\tilde s|\tilde w)\delta_{a,\tilde a} \Big]Q^\pi(\tilde w,a)\\
=&\sum_{a} 
\Big[\sum_{s\neq \tilde s}\beta(s|\tilde w)\pi(a|s)\Big]Q^\pi(\tilde w,a) 
+ \sum_a\beta(\tilde s|\tilde w)\delta_{a,\tilde a} Q^\pi(\tilde w,a)\\
\geq&\sum_{a} 
\Big[\sum_{s\neq \tilde s}\beta(s|\tilde w)\pi(a|s)\Big]Q^\pi(\tilde w,a) 
+ \sum_a\beta(\tilde s|\tilde w)\pi(a|s) Q^\pi(\tilde w,a)\\
=&\sum_{a} 
\Big[\sum_{s}\beta(s|\tilde w)\pi(a|s)\Big]Q^\pi(\tilde w,a)\\
=&\sum_a p^{\pi}(a|\tilde w) Q^{\pi}(\tilde w,a)  
= V^\pi(\tilde w). 
\end{align*}
This shows that $\sum_a p^{\pi'}(a|w)Q^\pi(w,a)\geq V^{\pi}(w)$, for all $w$. By Lemma~\ref{lemma:policyimprovement} $V^{\pi'}(w)\geq V^\pi(w)$, for all $w$. 
Repeating the same arguments, we conclude that any policy $\pi$ can be replaced by a policy $\pi'$ which is deterministic on each $s\in S$ with $|\supp\beta(s|
\cdot)|=1$ and which satisfies $V^{\pi'}(w)\geq V^\pi(w)$ for all $w\in W$. Sensor states with $|\supp(\beta(s|\cdot))|=0$ are never observed and the corresponding policy assignment immaterial. 
This completes the proof. 
\end{proof}

We conclude this section with a few remarks. 
It is worthwhile to mention the relation 
\begin{equation*}
	\sum_w p^\pi(w) V^\pi(w) = \frac{\Rcal(\pi)}{1-\gamma}, 
	\label{eq:averdisc}
\end{equation*}
which follows from~\citep[see][Fact~7]{singh1994learning} 
\begin{align*}
	\sum_w p^\pi(w) V^\pi(w)
	&= \sum_w p^\pi(w)  \sum_{a} p^\pi(a|w) \left[ R(w,a) + \gamma\sum_{w'} \alpha(w'|w,a) V^\pi(w')\right]\\
	&= \sum_w p^\pi(w) \left[ \sum_{a} p^\pi(a|w) R(w,a) +\gamma \sum_{w'}p^\pi(w'|w) V^\pi(w') \right]\\
	&= \Rcal(\pi) + \sum_w p^\pi(w) \gamma \sum_{w'}p^\pi(w'|w) V^\pi(w') \\
	&= \Rcal(\pi) +  \gamma \sum_{w'} p^\pi(w') V^\pi(w').  
\end{align*}

Note that $\Rcal_{\mu}^\gamma(\pi)=\sum_w\mu(w)V^\pi(w)$. 
Hence if two policies $\pi,\pi'$ satisfy $V^{\pi'}(w)\geq V^\pi(w)$, for all $w$, then $\Rcal_{\mu}^\gamma(\pi')\geq \Rcal_{\mu}^\gamma(\pi)$, for all $\mu$. 
However, the same hypothesis does not necessarily imply any particular relation between $\Rcal(\pi') = (1-\gamma) \sum_w p^{\pi'}(w)V^{\pi'}(w)$ and $\Rcal(\pi) = (1-\gamma) \sum_w p^{\pi}(w)V^{\pi}(w)$.

\section{Examples with analytic solutions}
\label{sec:optiex}

We discuss three examples where it is possible to compute the optimal memoryless policy analytically and show that it has stochasticity equal to the upper bound indicated in Theorem~\ref{theorem:POMDP}. 
This proves the optimality of the stochasticity bound. 
The first two examples consider the case $|U|=|S|=1$ and the third example the case with arbitrarily large $|U|$. 

\begin{example} 
	\label{exopt1}
	Consider a POMDP where the agent has only one sensor state, $K$ possible actions, and the world state transitions are as shown in Figure~\ref{fig:chain}. 
	At each world state only one action takes the agent further to the right, while all other actions take it to $w=0$ with probability one. 
	At the world state $w=K$, the agent receives a reward and all actions take it to $w=0$. 

	\begin{figure}[h]
			\centering
			\vspace{-1cm}
			\begin{tikzpicture}[->,>=stealth',shorten >=1pt,auto,node distance=3cm, thick, main node/.style={circle,fill=blue!20,draw},every loop/.style={<-,shorten <=1pt}]		
			\tikzstyle{state} = [circle, line width=1pt, draw=black, inner sep=.1cm, minimum size = .7cm, 
			fill=bl!5];
			
			\node[state] (0) at (0,0) {$0$}; 	
			\node[state] (1) at (2,0) {$1$}; 	
			\node[state] (2) at (4,0) {$2$}; 	
			\node[state] (3) at (8,0) {$K$}; 	
			\node[rectangle, draw=white, inner sep=0,fill=white] (R) at (6,0) {$\;\cdots\;$};
			
			\path[every node]
			(0) edge node [below] {$1$} (1)
			edge [loop left] node {$\hat 1$} (0);
					
			\path[every node]
			(1) edge node [below] {$2$} (2)
			edge [bend right, in=-120] node [above] {$\hat 2$} (0);
	
			\path[every node]
			(2) edge node [below] {$3$} (R)
			edge [bend right, in=-100] node [above]{$\hat 3$} (0);
					
			\path[every node]
			(R) edge node [below] {$K$} (3);
			
			\path[every node]
			(3) edge [bend right, in=-80] (0);
			\end{tikzpicture} 
			\caption{
			State transitions from Example~\ref{exopt1}. 
			The number in each node indicates the world state. The sensor state is always $s=1$. 
			At each world state there is only one action that takes the agent further to the right, while all other actions take it to $w=0$ with probability one.}
			\label{fig:chain}
		\end{figure}
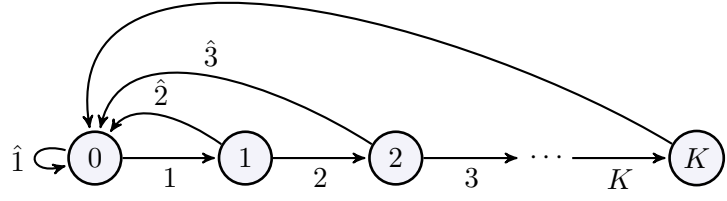
	
We will use the abbreviations $\pi_a = \pi(a|s=1)$ and $p_w = p^\pi(w)$. 
The world-state transition matrix is given by 
	\begin{equation*}
	(p(w'|w))_{w,w'} = 
	\begin{bmatrix}
	1-\pi_1 & \pi_1 & &&&\\
	1-\pi_2 && \pi_2  &&&\\
	1-\pi_3 &&& \pi_3 && \\
	\vdots    &&&&\ddots&\\
	1-\pi_K &&&&& \pi_K  \\
	1             &&&&&
	\end{bmatrix}. 
	\end{equation*} 
For this system the expected reward per time step is $\Rcal(\pi) = p^\pi(w=K)$. 
The stationary distribution of this transition matrix satisfies 
	\begin{eqnarray*}
		p_1 &=& p_0 \pi_1\\
		p_2 &=& p_1 \pi_2\\
		&\vdots&\\
		p_K &=& p_{K-1} \pi_K.
	\end{eqnarray*}
	Using the relation 
	\begin{equation*}
	1 = p_0 + p_1 +\cdots +p_K = p_0(1 + \pi_1 + \pi_1\pi_2 +\cdots +\pi_1\cdots\pi_K), 
	\end{equation*}
	we obtain 
	\begin{equation}
	p_K 
	= \frac{\pi_1\cdots\pi_K}{1 + \pi_1 + \pi_1\pi_2 +\cdots +\pi_1\cdots\pi_K}. 
	\label{equation:pK1}
	\end{equation}
	This is positive if and only if $\pi_1,\ldots, \pi_K$ are all larger than zero. 
	In turn, any optimal memoryless stochastic policy has at least $K$ positive probability actions at the single observation $s=1$. 
	The next proposition describes the precise form of the optimal memoryless policy (in this case unique). 
	\begin{proposition}
		\label{proposition:optimal1}
		The optimal memoryless policy of the POMDP Example~\ref{exopt1} is given by 
		\begin{eqnarray*}
			\pi_1 &=& c\\
			\pi_i &=& \pi_{i-1} + c \pi_1\cdots\pi_{i-1},\quad i=2,\ldots, K, 
		\end{eqnarray*} 
		where $c$ is the unique real positive solution of
		\begin{equation} 
		\pi_1+\cdots +\pi_K =1.\label{eq:ceq}
		\end{equation} 
	\end{proposition}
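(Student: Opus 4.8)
The plan is to maximize $p_K$ from Equation~\eqref{equation:pK1} over the closed simplex $\pi\in\Delta_A$ using Lagrange multipliers, and then to recognize the resulting stationarity conditions as the recursion in the statement. First I would note that by the discussion following Equation~\eqref{equation:pK1} an optimal policy must lie in the interior of $\Delta_A$ (all $\pi_a>0$), since $p_K=0$ on the boundary while $p_K>0$ is clearly achievable; hence the maximizer is a critical point of the Lagrangian $L(\pi,\nu)=p_K(\pi)-\nu(\sum_a\pi_a-1)$ with no active inequality constraints. Writing $N=\pi_1\cdots\pi_K$ for the numerator and $D=1+\pi_1+\pi_1\pi_2+\cdots+\pi_1\cdots\pi_K$ for the denominator, so that $p_K=N/D$, I would compute $\partial p_K/\partial\pi_j=(D\,\partial_j N-N\,\partial_j D)/D^2$, using $\partial_j N=N/\pi_j$ and $\partial_j D=\sum_{i\ge j}\pi_1\cdots\pi_i/\pi_j=(1/\pi_j)\sum_{i=j}^K\pi_1\cdots\pi_i$. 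Setting all these partials equal to the common value $\nu$ and clearing denominators gives, for each $j$, the relation $\tfrac{N}{\pi_j}\bigl(D-\sum_{i=j}^K\pi_1\cdots\pi_i\bigr)=\nu\,D^2$, i.e. $\tfrac{N}{\pi_j}\bigl(1+\pi_1+\cdots+\pi_1\cdots\pi_{j-1}\bigr)=\nu D^2$.

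The key step is then to extract the recursion from these $K$ equations. Dividing the equation for index $j$ by the one for index $j-1$ eliminates $\nu$, $N$, and $D$, leaving $\tfrac{\pi_{j-1}}{\pi_j}\cdot\tfrac{1+\pi_1+\cdots+\pi_1\cdots\pi_{j-1}}{1+\pi_1+\cdots+\pi_1\cdots\pi_{j-2}}=1$, which rearranges to $\pi_j=\pi_{j-1}+\pi_1\cdots\pi_{j-1}\cdot\tfrac{\pi_{j-1}}{\,1+\pi_1+\cdots+\pi_1\cdots\pi_{j-2}\,}$. To see that the fraction on the right is a constant $c$ independent of $j$, I would observe that the $j=2$ case forces $c:=\pi_1/(1)=\pi_1$ (taking the empty sum in the denominator to be handled by the $j=1$ normalization baseline $1$), and then argue by induction: assuming $\pi_i=\pi_{i-1}+c\,\pi_1\cdots\pi_{i-1}$ for $i\le j-1$, one telescopes $1+\pi_1+\cdots+\pi_1\cdots\pi_{j-2}$ and shows it equals $\pi_{j-1}/c$, so that the ratio defining the step at index $j$ is again $c$. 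This yields exactly $\pi_1=c$ and $\pi_i=\pi_{i-1}+c\,\pi_1\cdots\pi_{i-1}$.

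Finally I would pin down $c$ and argue uniqueness. Each $\pi_i$ is, by the recursion, a polynomial in $c$ with positive coefficients and zero constant term, strictly increasing in $c$ on $c\ge 0$; hence $S(c):=\pi_1(c)+\cdots+\pi_K(c)$ is continuous, strictly increasing, with $S(0)=0$ and $S(c)\to\infty$, so the equation $S(c)=1$ from~\eqref{eq:ceq} has a unique positive solution. This $c$ makes $\sum_a\pi_a=1$, so the constructed point lies in $\Delta_A$, and since it is the unique interior critical point while the continuous function $p_K$ attains its maximum on the compact simplex in the interior, it is the unique maximizer. The main obstacle I anticipate is the bookkeeping in the induction that shows the step ratio is a single constant $c$ — in particular getting the base case and the ``empty denominator'' conventions to line up with the clean form stated — rather than any conceptual difficulty; the Lagrange computation itself is routine once one sees that no boundary cases intrude.
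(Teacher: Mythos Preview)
Your proposal is correct and follows essentially the same Lagrange-multiplier approach as the paper. The paper reads off $\pi_i = c\,(1+\pi_1+\cdots+\pi_1\cdots\pi_{i-1})$ directly from the stationarity equations (with $c = N/(\nu D^2)$ in your notation), which immediately yields the recursion and makes your division-and-induction detour unnecessary; conversely, your monotonicity argument for the uniqueness of the positive root is at least as clean as the paper's appeal to Descartes' rule of signs, and you also supply the interior/boundary justification that the paper leaves implicit.
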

	
\begin{proof}[Proof of Proposition~\ref{proposition:optimal1}]
		The policy that maximizes $p_K$ can be found using the method of Lagrange multipliers. 
		The critical points satisfy 
		$1 - \sum_i\pi_i = 0$  and $\frac{\partial p_K}{\partial \pi_i} - \lambda = 0$ for all $i=1,\ldots, K$. 
		Computing the derivatives of~\eqref{equation:pK1} we find that 
		\begin{equation*}
		\frac{\pi_1\cdots\pi_K}{(1+\pi_1 + \cdots+\pi_1\cdots\pi_K)}
		\times\left(1 - \frac{\pi_1\cdots\pi_i + \cdots +\pi_1\cdots\pi_K}{1+\pi_1 + \cdots +\pi_1\cdots\pi_K}\right)\frac{1}{\pi_i} =\lambda,
		\quad\text{for $i=1,\ldots, K$}.  
		\end{equation*}
		This implies that 
		\begin{equation*}
		\pi_i = c( 1+\pi_1+\cdots +\pi_1\cdots\pi_{i-1}),\quad \text{for $i=1,\ldots, K$}, 
		\end{equation*}
		where $c = \lambda^{-1}\frac{\pi_1\cdots\pi_K}{(1+\pi_1 +\cdots + \pi_1\cdots\pi_K)^2}$. 
		
		Note that~\eqref{eq:ceq}, as a polynomial in $c$, has only positive coefficients, except for the zero degree coefficient, which is $-1$. 
		By Descartes' rule of sign, a polynomial with coefficients of this form has exactly one real positive root. 
	\end{proof}
	
	Interestingly, the optimal policy of Example~\ref{exopt1} does not assign uniform probabilities to all actions; 
	it satisfies $\pi_1< \cdots < \pi_K$. 
	The interpretation for this is that, when the agent has covered a longer distance toward the reward position $K$, the cost of being transported back to the start position before reaching the reward increases. 
	Hence the policy assigns more probability mass to the `right' actions at positions closer to the reward position. 
	This effect is less pronounced for larger values of $K$, for which the optimal policy is more uniform. 
	For illustration we solved the polynomial~\eqref{eq:ceq} numerically for different values of $K$.  
	The results are shown in Table~\ref{table:unifK}. 
	\begin{table}[h]
		\centering
		\begin{tabular}{c| c|c}
			$K$ & $\pi =(\pi_1,\ldots,\pi_K)$ & $\Rcal = p_K$ \\
			\hline 
			$1$ & $(1)$ & $0.5$\\
			$2$ & $(0.4142, 0.5858)$ & $0.1464$\\
			$3$ & $(0.2744 ,   0.3496 ,   0.3760)$ & $0.0256$\\
			$4$ & $(0.2104,    0.2547,    0.2659,    0.2689)$ &$0.0030$
		\end{tabular}
		\caption{Optimal memoryless policies obtained from Proposition~\ref{proposition:optimal1} for the POMDP Example~\ref{exopt1} for $K=1,\ldots, 4$. }
		\label{table:unifK} 
	\end{table}
\end{example}

\begin{example} 
\label{exopt2}
We consider a slight generalization of Example~\ref{exopt1}. 
Instead of fully deterministic transitions $p(w'|w,a)$ we now assume that at each $w=0,\ldots, K-1$ action $a={w+1}$ takes the agent to $w'=w+1$ with probability $t_{w+1}\in(0, 1]$ and to $w'=0$ with probability $1-t_{w+1}$. 
The world state transition matrix is given by 
	\begin{equation*}
	(p(w'|w))_{w,w'} = 
	\begin{bmatrix}
	1 - t_1\pi_1  & t_1\pi_1 & &&&\\
	1 - t_2\pi_2 && t_2\pi_2  &&&\\
	1 - t_3\pi_3 &&& t_3\pi_3 && \\
	\vdots    &&&&\ddots&\\
	1 - t_K\pi_K &&&&& t_K\pi_K  \\
	1             &&&&&
	\end{bmatrix}. 
	\end{equation*}
For this system the expected reward per time step is $\Rcal(\pi) = p^\pi(w=K) = p_K$. 
Similar to~\eqref{equation:pK1} we find that 
	\begin{equation*}
	p_K = \frac{t_1\pi_1\cdots t_K\pi_K}{ 1 + t_1\pi_1 + \cdots + t_1\pi_1\cdots t_K\pi_K}. 
	\label{eq:eqmaxrewt}
	\end{equation*}
In close analogy to Proposition~\ref{proposition:optimal1} we obtain the following description of the optimal policies. 
\begin{proposition}
\label{proposition:optimal2}
The optimal memoryless policy of the POMDP Example~\ref{exopt2} is given by 
\begin{eqnarray*}
			\pi_1 &=& c\\
			\pi_i &=& \pi_{i-1} + c t_1\pi_1\cdots t_{i-1}\pi_{i-1},\quad i=2,\ldots, K, 
\end{eqnarray*} 
where $c$ is the unique real positive solution of
\begin{equation*} 
		\pi_1+\cdots +\pi_K =1. 
\end{equation*} 
\end{proposition}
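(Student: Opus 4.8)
The plan is to mirror the proof of Proposition~\ref{proposition:optimal1}, replacing each occurrence of $\pi_i$ inside the stationary-distribution formula by the effective transition probability $t_i\pi_i$. First I would record that the expected reward per time step equals $p_K$ and that, abbreviating $q_i := t_i\pi_i$, the stationary equations $p_i = p_{i-1} q_i$ together with $\sum_{w=0}^{K} p_w = 1$ yield
\begin{equation*}
p_K = \frac{q_1\cdots q_K}{1 + q_1 + q_1 q_2 + \cdots + q_1\cdots q_K},
\end{equation*}
which is precisely the formula for $p_K$ stated in the preamble of the proposition. Since the $t_i$ are fixed positive constants, maximizing $p_K$ over the simplex $\{\pi\colon \pi_i\geq 0,\ \sum_i\pi_i = 1\}$ is the problem of interest. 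The function $p_K$ is continuous on this compact set and vanishes on its boundary (any $\pi_i = 0$ forces the numerator $q_1\cdots q_K$ to vanish), so a maximizer exists and lies in the relative interior, where all $\pi_i>0$ and the Lagrange conditions apply.

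Next I would apply the method of Lagrange multipliers to the interior problem: the critical points satisfy $1-\sum_i\pi_i = 0$ and $\partial p_K/\partial\pi_i = \lambda$ for all $i=1,\ldots,K$. Because $\partial p_K/\partial\pi_i = t_i\,\partial p_K/\partial q_i$ and the dependence of $p_K$ on the variables $q_i$ is formally identical to its dependence on the $\pi_i$ in Proposition~\ref{proposition:optimal1}, the same differentiation and simplification of the rational function gives
\begin{equation*}
t_i\,\frac{q_1\cdots q_K}{(1 + q_1 + \cdots + q_1\cdots q_K)^2}\Big(1 + q_1 + \cdots + q_1\cdots q_{i-1}\Big)\frac{1}{q_i} = \lambda, \quad i=1,\ldots,K.
\end{equation*}
Using $t_i/q_i = 1/\pi_i$, this collapses to $\pi_i = c\,\big(1 + q_1 + \cdots + q_1\cdots q_{i-1}\big)$ with $c = \lambda^{-1}(q_1\cdots q_K)/(1 + q_1 + \cdots + q_1\cdots q_K)^2 > 0$. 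Reading this off recursively gives $\pi_1 = c$ and $\pi_i - \pi_{i-1} = c\,q_1\cdots q_{i-1} = c\,t_1\pi_1\cdots t_{i-1}\pi_{i-1}$, which is exactly the claimed form of the optimal policy.

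Finally I would pin down $c$: substituting the recursion into the normalization $\pi_1 + \cdots + \pi_K = 1$ produces a polynomial equation in $c$ whose coefficients are all positive except the constant term, which is $-1$; by Descartes' rule of signs it has exactly one positive real root, so $c$ and hence the critical policy are unique. Since a maximizer of $p_K$ over the simplex must be an interior critical point and there is only one, it coincides with this policy, which completes the proof. The main obstacle I anticipate is the derivative computation together with the algebraic simplification that turns the Lagrange conditions into the clean recursion (the step carried out for Proposition~\ref{proposition:optimal1}); once that identity is reused with $q_i$ in place of $\pi_i$, the remaining points — existence and interiority of the maximizer, and uniqueness of $c$ via Descartes' rule — are routine.
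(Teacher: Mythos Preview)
Your proposal is correct and follows exactly the approach the paper intends: the paper does not give a separate proof of Proposition~\ref{proposition:optimal2} but simply notes it is obtained ``in close analogy to Proposition~\ref{proposition:optimal1},'' and your substitution $q_i = t_i\pi_i$ is the natural way to make that analogy precise. Your explicit remark that $p_K$ vanishes on the boundary of the simplex (forcing the maximizer to be interior and hence a Lagrange critical point) is a welcome addition that the paper leaves implicit.
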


The next proposition shows that, in general, for this type of examples, the optimal policy cannot be written as a small convex combination of deterministic policies. 
\begin{proposition}
\label{proposition:bigmix}
There is a choice of $t_1,\ldots,t_K$ for which the optimal memoryless policy of the POMDP Example~\ref{exopt2} cannot be written as a convex combination of $K-1$ deterministic policies. 
\end{proposition}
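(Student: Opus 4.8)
The plan is to reduce the statement to a support count and then read off the optimizer from Proposition~\ref{proposition:optimal2}. In Example~\ref{exopt2} there is a single sensor state, so a stationary policy is nothing but a distribution $\pi=(\pi_1,\dots,\pi_K)\in\Delta_A$, a deterministic policy is a vertex $\delta_a$ of this simplex, and a convex combination of $K-1$ deterministic policies $\sum_{\ell=1}^{K-1}\lambda_\ell\,\delta_{a_\ell}$ is supported on $\{a_1,\dots,a_{K-1}\}$, hence has at most $K-1$ positive entries. So it suffices to exhibit admissible transition probabilities $t_1,\dots,t_K$ for which the optimal memoryless policy has all $K$ entries strictly positive, i.e.\ full support; equivalently, it is not contained in any $(K-2)$-dimensional face of $\Delta_A$, which is exactly the assertion that the bound $|U|(|A|-1)=K-1$ of Theorem~\ref{theorem:POMDP} cannot be lowered here.

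For the full-support claim I would invoke Proposition~\ref{proposition:optimal2}, which gives the optimizer via $\pi_1=c$ and $\pi_i=\pi_{i-1}+c\,t_1\pi_1\cdots t_{i-1}\pi_{i-1}$ for $i=2,\dots,K$, with $c$ the unique positive root of $\pi_1+\cdots+\pi_K=1$. Fix any $t_1,\dots,t_K\in(0,1]$. Then $\pi_1=c>0$, and by induction each $\pi_i$ arises from the positive number $\pi_{i-1}$ by adding the strictly positive quantity $c\,t_1\pi_1\cdots t_{i-1}\pi_{i-1}$, so $0<\pi_1<\pi_2<\cdots<\pi_K$. Thus the optimal policy has $K$ positive entries and, by the first paragraph, cannot be written as a convex combination of $K-1$ deterministic policies. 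Since this holds for every admissible choice, one may simply take $t_1=\cdots=t_K=1$ (which recovers Example~\ref{exopt1}).

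As a self-contained alternative to citing Proposition~\ref{proposition:optimal2}, I would argue directly from the closed form $\Rcal(\pi)=p_K=\frac{t_1\pi_1\cdots t_K\pi_K}{1+t_1\pi_1+\cdots+t_1\pi_1\cdots t_K\pi_K}$: whenever some $\pi_i=0$ the numerator vanishes, while the uniform policy gives a strictly positive value, so every maximizer lies in the relative interior of $\Delta_A$ and hence has full support; the support-counting step then finishes the proof unchanged, again for any admissible $t_1,\dots,t_K$.

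There is no real obstacle: the only substantive ingredient is the full-support property of the optimizer, which is already established (for $t_i=1$) in the discussion of Example~\ref{exopt1} and is immediate from the recursion in Proposition~\ref{proposition:optimal2} otherwise. The only points requiring a little care are to keep all $t_j>0$ so that strict positivity propagates through the recursion, and to spell out the equivalence between ``convex combination of $K-1$ deterministic policies'' and ``contained in a proper face of $\Delta_A$'', so that the proposition is seen to be exactly the tightness counterpart of Theorem~\ref{theorem:POMDP} in the case $|U|=|S|=1$.
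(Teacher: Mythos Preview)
Your argument is correct and considerably simpler than the paper's. You reduce the claim to a support count: since $|S|=1$, policies live in the simplex $\Delta_A$, and a convex combination of $K-1$ vertices lies in a proper face, i.e.\ has support at most $K-1$; then Proposition~\ref{proposition:optimal2} (or the direct observation that $p_K=0$ whenever some $\pi_i=0$) forces full support, and you are done for \emph{every} admissible $(t_1,\dots,t_K)$.

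The paper takes a different and stronger route. Rather than fixing a single $(t_1,\dots,t_K)$ and checking full support, it shows that the \emph{family} of optimal policies, as $t_{K-1}$ ranges over an interval, traces out a curve whose projection to the first $K-1$ coordinates is a full-rank linear image of a piece of the moment curve $c\mapsto(c,c^2,\dots,c^{2^K})$; since a hyperplane meets a moment curve in only finitely many points, no finite union of $(K-2)$-dimensional affine spaces can contain this family. This establishes more than the proposition asks: not only does each optimal policy avoid every face of $\Delta_A$, but the collection of optimizers cannot be covered by any finite union of $(K-2)$-dimensional affine pieces, which speaks to the impossibility of capturing all optimizers with a low-dimensional piecewise-affine model. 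Your argument proves exactly the stated proposition with no machinery; the paper's argument yields a genericity statement about the whole family of optimizers at the cost of the moment-curve detour.
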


\begin{proof}[Proof of Proposition~\ref{proposition:bigmix}]
		We show that the set of optimal memoryless policies described in Proposition~\ref{proposition:optimal2}, 
		for all $t_1,\ldots,t_K$, is not contained in any finite union of $K-2$ dimensional affine spaces.  
		
		Consider the expression $\pi_1+\cdots+\pi_K$, where $\pi_1=c$ and $\pi_i = \pi_{i-1} + c t_1\pi_1\cdots t_{i-1}\pi_{i-1}$. 
		We view this as a polynomial in $c$ with coefficients depending on $t_1,\ldots,t_{K}$. 
		The derivative with respect to $t_{K-1}$ is non-zero (as soon as $K\geq 2$ and $c\neq 0$). 
		Hence the solution of $\pi_1+\cdots+\pi_K = 1$ is a non-constant function $c$ of $t_{K-1}$. 
		
		Consider the set of optimal policies for a fixed choice of $t_1,\ldots, t_{K-2}$ and an interval $T\subseteq(0,1]$ of values of $t_{K-1}$. 
		This is given by 
		\begin{equation*}
		(f_1(c) , f_2(c),\ldots, f_{2^{K-1}}(c), f_{2^K}(c)t_{K-1}),\quad\text{for all} t_{K-1}\in T, 
		\end{equation*} 
		where $c$ is a non-constant function of $t_{K-1}$, and 
		$f_j$ is a polynomial of degree $j$ in $c$ with coefficients depending on $t_1,\ldots, t_{K-2}$. 
The restriction of these vectors to the first $K-1$ coordinates is 
		\begin{equation*}
		(f_1(c) , f_2(c),\ldots, f_{2^{K-1}}(c)),\quad\text{for all} c\in C,
		\end{equation*}
		where $C = \{c(t_{K-1}) \colon t_{K-1}\in T\}\subset \mathbb{R}$ is an interval with non-empty interior. 
		This set is a linear projection of the interval $\{ (c, c^2, \ldots, c^{2^K}) \colon c\in C\}$ of the moment curve in $2^K$-dimensional Euclidean space, by the matrix $M$ with entry $M_{j,i}$ equal to the degree-$i$ coefficient of $f_j$, for all $j=1,\ldots,K-1$ and $i=1,\ldots, 2^K$. 
		This matrix is full rank, since each of the $f_j$ has different degree. 
		
		It is well known that each hyperplane intersects a moment curve at most at finitely many points. 
		Since our linear projection is full rank, the smallest affine space containing infinitely many of its points is equal to the ambient space $\mathbb{R}^{K-1}$. 
		In turn, no finite union of convex hulls of $K-1$ polices contains the set of optimal policies for all $t_{K-1}\in T$. 
\end{proof}
\end{example}

\begin{example} 
\label{exopt3}
Consider a POMDP where the agent has $U$ sensor states, $K$ possible actions, and the world state transitions are as shown in Figure~\ref{fig:chain2}. 
The world state transition matrix is given by 
			\begin{equation*}
			(p(w'|w))_{w,w'} = 
			\begin{bmatrix}
			1- t_{11}\pi_{11} &\! t_{11}\pi_{11}\! &&&&& \\
			\vdots    &&\!\ddots\!&&&&\\
			1-t_{1K}\pi_{1K} &&& \!t_{1K}\pi_{1K}\!  &&\\
			1-t_{21}\pi_{21} &&&& \!t_{21}\pi_{21}\! &\\
			\vdots &&&&&  \!\ddots\!& \\
			1-t_{UK}\pi_{UK} &&&& & & \!t_{UK}\pi_{UK}\!\\
			1 &&&&&
			\end{bmatrix},  
			\end{equation*}

	\begin{figure}[t]
		\centering
		\vspace{-1cm}
		\begin{tikzpicture}[->,>=stealth',shorten >=1pt,auto,node distance=3cm, thick, main node/.style={circle,fill=blue!20,draw},every loop/.style={<-,shorten <=1pt}]		
		
		\tikzstyle{neu} = [circle, line width=1pt, draw=black, inner sep=.1cm, minimum size = .7cm, fill=bl!5];
		\tikzstyle{neua} = [circle, double, line width=.8pt, draw=black, inner sep=.1cm, minimum size = .7cm, fill=bl!5];
		\tikzstyle{neub} = [circle, line width=2pt, draw=black, inner sep=.1cm, minimum size = .7cm, fill=bl!5];
		\tikzstyle{neuc} = [circle, line width=1pt, draw=black, inner sep=.1cm, minimum size = .7cm, fill=bl!5];
				
		\node[neu] (0) at (0,0) {$11$}; 	
		\node[neu] (1) at (2,0) {$12$}; 	
		\node[neu] (2) at (4,0) {$13$}; 	
		\node[neua] (3) at (8,0) {$1K$}; 	
		\node[rectangle, draw=white, inner sep=0,fill=white] (R) at (6,0) {$\;\cdots\;$};
				
		\path[every node]
		(0) edge node [below] {$\pi_{12}$} (1);
		\path[every node]
		(1) edge node [below] {$\pi_{13}$} (2);
		\path[every node]
		(2) edge node [below] {$\pi_{14}$} (R)
		(R) edge node [below] {$\pi_{1K}$} (3);
				
		\node[neua] (01) at (0,-2) {$21$}; 	
		\node[neua] (11) at (2,-2) {$22$}; 	
		\node[neua] (21) at (4,-2) {$23$}; 	
		\node[neub] (31) at (8,-2) {$2K$}; 	
		\node[rectangle, draw=white, inner sep=0,fill=white] (R1) at (6,-2) {$\;\cdots\;$};
								
		\draw[->] (3) to [out=-90,in=90] (01) node[draw=none,fill=none, above=.5cm] {$\pi_{21}\quad$};
		
		\path[every node]
		(01) edge node [below] {$\pi_{22}$} (11)
		(11) edge node [below] {$\pi_{23}$} (21)
		(21) edge node [below] {$\pi_{24}$} (R1)
		(R1) edge node [below] {$\pi_{2K}$} (31);
		
		\node[neub] (02) at (0,-4) {$31$}; 	
		\node[neub] (12) at (2,-4) {$32$}; 	
		\node[neub] (22) at (4,-4) {$33$}; 	
		\node[neuc] (32) at (8,-4) {$3K$}; 	
		\node[rectangle, draw=white, inner sep=0,fill=white] (R2) at (6,-4) {$\,\cdots$};		
		\node[neu] (St) at (-1,-1) {$10$}; 	
		
		\draw[->] (31) to [out=-90,in=90] (02) node[draw=none,fill=none, above=.5cm] {$\pi_{31}\quad$};
		
		\path[every node]
		(02) edge node [below] {$\pi_{33}$} (12)
		(12) edge node [below] {$\pi_{33}$} (22)
		(22) edge node [below] {$\pi_{34}$} (R2)
		(R2) edge node [below] {$\pi_{3K}$} (32);
				
		\draw[->] (32) to [out=-90,in=0] (3,-5) to [out=180,in=-45] (-.5,-4.5) to [out=135,in=-90] (St);
		\draw[->] (St) to [out=90, in=180]  (0) node[draw=none,fill=none, left] {$\pi_{11}$\quad\quad}; 
		\end{tikzpicture} 
		\vspace{-1cm}
		\caption{State transitions from Example~\ref{exopt3}. 
		The number in each node indicates the world state. 
		The type of circle indicates the sensation of the agent (single stroke stands for $s=1$, double stroke for $s=2$, etc.). 
		At each world state exactly one action takes the agent further, while all other actions take it back to $w=0$ (arrows omitted for clarity). 
		At $w_{UK}$ the agent receives a reward of one and is invariably taken to $w_{10}$. 
 		}
		\label{fig:chain2}
	\end{figure}
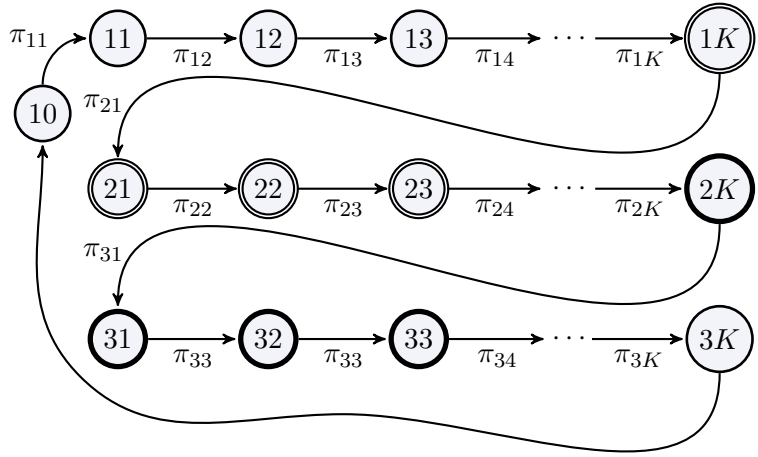	

	The next proposition describes the optimal memoryless policy. 
	\begin{proposition}
		\label{proposition:optimal3}
		The optimal memoryless policy is given by 
		\begin{eqnarray*}
			\pi_{j1} &=& c_j d_j, \\
			\pi_{ji} &=& \pi_{j,i-1} + c_je_j t_{j1}\pi_{j1}\cdots t_{j,i-1}\pi_{j,i-1}, \quad i=2,\ldots, K, 
		\end{eqnarray*}
		where 
		\begin{eqnarray*}
			d_j&=& 1+ t_{11}\pi_{11}+\cdots + t_{11}\pi_{11}\cdots t_{j-1,K}\pi_{j-1,K}, \\
			e_j &=& t_{11}\pi_{11}\cdots t_{j-1,K}\pi_{j-1,K}, 
		\end{eqnarray*}
		and $c_j$ is the unique real positive solution of 
		\begin{equation*}
		\pi_{j1} + \cdots + \pi_{jK} = 1, 
		\end{equation*} 
		for $j=1,\ldots, U$. 
		Here empty products are defined as $1$ and empty sums as $0$. 
	\end{proposition}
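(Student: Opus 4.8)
The plan is to put $\Rcal$ into closed form, argue that the maximizer lies in the interior of the policy set, characterize it by Lagrange multipliers, and finally extract uniqueness from the triangular structure of the critical-point equations. For the closed form, note that, exactly as in Examples~\ref{exopt1} and~\ref{exopt2}, the world-state chain driven by a policy $\pi$ is a ``chain with restart'': from every state other than $w_{UK}$ the distinguished action succeeds with probability $t_{ji}\pi_{ji}$ and otherwise the process returns to $w_{10}$, while from $w_{UK}$ it returns to $w_{10}$ with probability one. Writing pairs $(j,i)$ in lexicographic order $\preceq$ and setting $P_{ji}:=\prod_{(j',i')\preceq(j,i)}t_{j'i'}\pi_{j'i'}$ (with $P_{00}:=1$ for $w_{10}$), the stationarity equations give $p^\pi(w_{ji})=p^\pi(w_{10})\,P_{ji}$ and, after normalization,
\[
\Rcal(\pi)=p^\pi(w_{UK})=\frac{\mathcal N}{\mathcal D},\qquad
\mathcal N:=\prod_{j=1}^{U}\prod_{i=1}^{K}t_{ji}\pi_{ji},\qquad
\mathcal D:=1+\sum_{j,i}P_{ji}.
\]
This agrees with~\eqref{eq:rew} when all $\pi_{ji}>0$ (so the chain is irreducible and aperiodic) and extends continuously to $\Delta_{S,A}$. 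On the boundary of $\Delta_{S,A}=\bigtimes_{j=1}^{U}\Delta_A$ some $\pi_{ji}$ vanishes, hence $\mathcal N=0$ and $\Rcal=0$, while $\Rcal>0$ in the interior; as $\Rcal$ is continuous on the compact set $\Delta_{S,A}$, its maximum is attained in the interior, where the only active constraints are $\sum_i\pi_{ji}=1$, $j=1,\dots,U$, so that for each $j$ the partial derivative $\partial\Rcal/\partial\pi_{ji}$ is independent of $i$.

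I would then compute these derivatives. Since $\mathcal N$ contains each $\pi_{ji}$ to the first power and $P_{j'i'}$ contains $\pi_{ji}$ precisely when $(j,i)\preceq(j',i')$, differentiating $\mathcal N/\mathcal D$ and collecting terms yields
\[
\frac{\partial\Rcal}{\partial\pi_{ji}}=\frac{\mathcal N\,Q_{ji}}{\pi_{ji}\,\mathcal D^{2}},\qquad
Q_{ji}:=1+\sum_{(j',i')\prec(j,i)}P_{j'i'}.
\]
Expanding $Q_{ji}$ one reads off $Q_{j1}=d_j$ and $Q_{ji}=d_j+e_j\sum_{i'=1}^{i-1}\prod_{i''=1}^{i'}t_{ji''}\pi_{ji''}$, with $d_j$ and $e_j$ exactly the quantities in the statement ($e_j=P_{j-1,K}$). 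Hence the Lagrange condition ``$\partial\Rcal/\partial\pi_{ji}$ independent of $i$'' is equivalent to ``$Q_{ji}/\pi_{ji}$ independent of $i$'', and putting $c_j:=\pi_{j1}/d_j>0$ gives $\pi_{ji}=c_jQ_{ji}$, i.e.\ $\pi_{j1}=c_jd_j$ and, subtracting the expressions for $i$ and $i-1$, $\pi_{ji}=\pi_{j,i-1}+c_je_j\,t_{j1}\pi_{j1}\cdots t_{j,i-1}\pi_{j,i-1}$; the scalar $c_j$ is then determined by $\sum_{i=1}^{K}\pi_{ji}=1$.

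It remains to see that this system has a unique solution. It is triangular in the blocks: block $j=1$ involves only $c_1$, with $d_1=e_1=1$; and once $\pi_{1\cdot},\dots,\pi_{j-1,\cdot}$ are fixed, $d_j$ and $e_j$ are determined and strictly positive, so block $j$ involves only $c_j$. Within a block, the recursion above shows each $\pi_{ji}$ is a polynomial in $c_j$ with strictly positive coefficients and zero constant term, so $\sum_i\pi_{ji}(c_j)-1$ has positive coefficients apart from the constant term $-1$; by Descartes' rule of signs it has exactly one positive real root, which fixes $c_j$. Solving block by block produces a unique interior critical point, which by the localization step is the global maximizer, and it is precisely the policy in the statement. (Its entries are all positive, so it lies in the interior of $\Delta_{S,A}$, a face of dimension $|S|(|A|-1)=|U|(|A|-1)$ --- this is the tightness assertion invoked in the proof of Theorem~\ref{theorem:POMDP}.) I expect the main obstacle to be the bookkeeping in the derivative computation --- isolating the factor $Q_{ji}$ and recognizing its decomposition through $d_j$ and $e_j$ --- together with checking that $d_j$ and $e_j$ stay strictly positive along the triangular solution, which is what makes Descartes' rule applicable at every block.
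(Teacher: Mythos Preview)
Your proposal is correct and follows essentially the same route as the paper: write $p_{UK}=\mathcal N/\mathcal D$ in closed form and apply Lagrange multipliers to obtain $\pi_{ji}=c_j(1+t_{11}\pi_{11}+\cdots+t_{11}\pi_{11}\cdots t_{j,i-1}\pi_{j,i-1})$, which is exactly your $\pi_{ji}=c_jQ_{ji}$. You add two details the paper's proof leaves implicit---the interior localization ($\mathcal N=0$ on the boundary) and the block-triangular Descartes' rule argument for uniqueness of the $c_j$---both of which are sound and welcome.
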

	
	\begin{proof}[Proof of Proposition~\ref{proposition:optimal3}]
	After some algebra, similar to~\eqref{equation:pK1}, one finds that the last entry of the stationary world state distribution is given by 
		\begin{equation*}
		p_{UK} = \frac{ t_{11}\pi_{11}\cdots t_{UK}\pi_{UK} }{1+t_{11}\pi_{11}+\cdots + t_{11}\pi_{11}\cdots t_{UK}\pi_{UK}}. 
		\end{equation*}
		We can maximize this with respect to $\pi$ using the method of Lagrange multipliers. 
		This yields the following conditions: 
		\begin{eqnarray*}
			1-\sum_i\pi_{ji} &=& 0,\quad\text{for all $j=1,\ldots,U$},\\
			\frac{\partial p_{UK}}{\partial \pi_{ji}} -\lambda_{j} &=& 0, \quad\text{for all $i=1,\ldots,K$ and $j=1,\ldots,U$}. 
		\end{eqnarray*}
		From this we obtain
		\begin{multline*}
		\lambda_j = 
		\frac{1}{\pi_{ji}}
		\frac{t_{11}\pi_{11}\cdots t_{UK}\pi_{UK}}{(1+t_{11}\pi_{11} + \cdots+t_{11}\pi_{11}\cdots t_{UK}\pi_{UK})} \\
		\times\left(1 - \frac{t_{11}\pi_{11}\cdots t_{ji}\pi_{ji} + \cdots +t_{11}\pi_{11}\cdots t_{UK}\pi_{UK}}{1+t_{11}\pi_{11} + \cdots +t_{11}\pi_{11}\cdots t_{UK}\pi_{UK}}\right),\\ \quad\text{for all  $i=1,\ldots, K$ and $j=1,\ldots, U$}.  
		\end{multline*}
		This implies 
		\begin{equation*}
		\pi_{ji} =  c_j( 1+t_{11}\pi_{11}+\cdots +t_{11}\pi_{11}\cdots t_{j,i-1}\pi_{j,i-1}), 
		\quad\text{for all  $i=1,\ldots, K$ and $j=1,\ldots, U$}, 
		\end{equation*}
		where $c_j = \lambda_j^{-1}\frac{t_{11}\pi_{11}\cdots t_{UK}\pi_{UK}}{(1+t_{11}\pi_{11} +\cdots + t_{11}\pi_{11}\cdots t_{UK}\pi_{UK})^2}$. 
	\end{proof}
	
For each sensor state the optimal policy has $K$ positive probability actions. 
In particular, the smallest face of $\Delta_{S,A}$ which contains the optimal policy has dimension $|U|(|A|-1)$. 	
\end{example}

\end{document}